\tikzset{%
    symbol/.style={%
        ,draw=none
        ,every to/.append style={%
            edge node={node [sloped, allow upside down, auto=false]{$#1$}}}
    }
}
\def\presuper#1#2{\mathop{}\mathopen{\vphantom{#2}}^{#1}\kern-\scriptspace#2}
\newcommand{\Nn}{\mathbb{N}}
\newcommand{\Rr}{\mathbb{R}}
\newcommand{\Cc}{\mathbb{C}}
\newcommand{\Zz}{\mathbb{Z}}
\newcommand{\Dom}{\mathrm{dom}}	
\newcommand{\F}{\mathcal{F}}
\newcommand{\K}{\mathcal{K}}	
\renewcommand{\L}{\mathcal{L}}	
\newcommand{\E}{\mathcal{E}}	
\newcommand{\G}{\mathcal{G}}	
\newcommand{\A}{\mathcal{A}} 	
\newcommand{\U}{\mathfrak{A}}  	
\newcommand{\End}{\mathrm{End}}	
\newcommand{\Hom}{\mathrm{Hom}}
\newcommand{\Op}{\mathrm{Op}}
\newcommand{\Ff}{\F_{\mathrm{fib}}}	
\renewcommand{\L}{\mathcal{L}} 
\newcommand{\Diff}{\mathrm{Diff}}	
\newcommand{\supp}{\mathrm{supp}}	
\newcommand{\Id}{\operatorname{Id}}	
\newcommand{\Aut}{\mathrm{Aut}}
\newcommand{\scal}[2]{\langle #1, #2 \rangle}	
\newcommand{\dtimes}{\bar{\times}}
\newcommand{\pa}{\mathrm{pa}}
\newcommand{\Exp}{\mathrm{Exp}}
\newcommand{\iso}{\xrightarrow{\sim}}	
\newtheorem{Thm}{Theorem}[section]
\newtheorem{Lem}[Thm]{Lemma}
\newtheorem{Prop}[Thm]{Proposition}
\theoremstyle{definition}
\newtheorem{Def}[Thm]{Definition}
\newtheorem{Rem}[Thm]{Remark}
\begin{document}
\setcounter{page}{1}


\title[Volterra Calculus]{A Volterra Calculus for Lie Groupoids}


\author[Karsten Bohlen]{Karsten Bohlen}



\subjclass[2000]{Primary 47J35; Secondary 58H05, 35K67.}

\keywords{groupoid, Volterra calculus, heat kernel}



\begin{abstract}
A pseudodifferential Volterra calculus for inverting parabolic differential equations on Lie groupoids is introduced. This enables the study of fundamental solutions of various cases of heat flows on singular manifolds with corners with non-resonant boundary indicial symbols, such as the $b$-manifolds, as well as other geometric bisection covariant heat flows. We also establish the short time asymptotic expansion for the heat kernel of a positive, elliptic differential operator on a Lie groupoid that acts on suitable Sobolev Hilbert modules and is positive definite with respect to the appropriate $L^2$ inner product.
\end{abstract} \maketitle 


\section{Introduction}


The Volterra calculus was introduced by P. Greiner \cite{Greiner} and A. Piriou \cite{Piriou} in order to study the fundamental solutions of parabolic equations, such as the heat equation. This construction has been extended to various situations, cf. \cite{PongeMika}. For instance, R. Melrose extended the calculus to investigate the $b$-differential operators on compact manifolds with boundary and used it to give a heat kernel proof of the Atiyah-Patodi-Singer index theorem \cite{Melrose}. The (small) $b$-calculus of pseudodifferential operators of R. Melrose was extended to $b$-groupoids by B. Monthubert \cite{Monthubert} and there followed many similar groupoid constructions for other types of singular manifolds. We refer to \cite{DebordSkandalis} for a unified approach to blowup constructions on Lie groupoids. A pseudodifferential calculus for Lie groupoids has been described by V. Nistor, A. Weinstein and P. Xu \cite{NWX}. Our aim here is to introduce a Volterra calculus on a given Lie groupoid that enhances the existing pseudodifferential calculus to deal with parabolic equations, stated with respect to elliptic right invariant differential operators. In particular this enables a study of the heat semigroup defined on a Lie groupoid and to establish its short time asymptotic expansion. It should be noted that there are possible logarithmic terms that may arise in the asymptotic expansion, which can be local in origin or arise from the boundary resonances of the indicial symbols on a manifold with corners, especially in more singular cases. Secondly, there is a constraint involving possible limited transverse regularity. The issue of the regularity of the heat semigroup on a Lie groupoid has been studied e.g. by B. K. So \cite{So} and in the case of regular folations by J. L. Heitsch \cite{Heitsch}. With the exception of a few cases such as that of $b$-type \cite{Melrose} and edge type singularities \cite{Albin} on manifolds with boundary, the holonomy groupoid of regular foliations \cite{Heitsch}, as well as asymptotically Euclidean boundary groupoids \cite{So}, it is generally not known if the heat kernel can always be smoothed out in the transverse direction. The third issue that one encounters in the construction of pseudodifferential calculi on Lie groupoids is that the inverse operator of invertible, elliptic elements of the calculus may not be contained in the calculus due to the required proper support conditions. In the case of the Volterra calculus a suitable choice of sufficiently well-behaved smoothing ideal ensures the correct smoothing behavior outside of the diagonal. Recently, in work due to E. van Erp and R. Yuncken \cite{vEY} a groupoid approach to pseudodifferential operators is described; compare also the work of Debord-Skandalis, cf. \cite{DS}. The idea is to access an element in the pseudodifferential calculus on a manifold by lifting it to a family of operators on the tangent groupoid that glues, in a precise sense, the operator to its principal symbol. This vantage point allows also an elegant treatment of extensions to the standard pseudodifferential calculus that involve a filtration of the tangent space or Lie algebroid, such as the anisotropic operators that constitute the Volterra heat calculus. Such a calculus may have applications to the study of certain bisection covariant longitudinal heat flows, cf. \cite{Grad}. We should remark however that we can not simply recover the Volterra calculus described herein as a particular instance of E. van Erp and R. Yuncken, due in large part to the aforementioned difficulties. However, one is able to obtain a heat calculus this way in the case of Lie groupoids over a compact base that arise from Lie group actions, cf. \cite{SLLS}.

\subsection*{Overview}

We fix a compact manifold with corners $M$ of dimension $d \in \Nn$ and a Lie groupoid $\G \rightrightarrows \G_0 = M$ with a vector bundle $E \to M$. 
On the compact manifold with corners $M$, we consider an induced filtration of the Lie algebra of vector fields on $M$ that is of a parabolic signature. 

We denote by $\overline{r}, \overline{s} \colon \G \dtimes \Rr \rightrightarrows M \times \{0\} = M$ the groupoid formed as product with the additive group $(\Rr, +)$. The calculus is defined as
\begin{align}
\widehat{\Psi}_{ai}^{m}(\G \dtimes \Rr, \overline{r}^{\ast} E) &:= \Psi_{ai}^{m}(\G \dtimes \Rr, \overline{r}^{\ast} E) + \Psi_{\G \dtimes \Rr, ai}^{-\infty}(\overline{r}^{\ast} E), \label{LargeVolterraCalculus}
\end{align} 
i.e. the linear span formed out of the quantization of anisotropic symbols of order $m$, that yield compactly supported Schwartz kernels, with the addition of residual operators that belong to the class of the Vassout ideal $\Psi_{\G \dtimes \Rr}^{-\infty}$ (cf. \cite{Vassout}) on the product groupoid, with the additional Volterra or causality property. Elements of the Vassout ideal are constructed out of Hilbert $C^{\ast}$-modules and by definition reside inside the reduced $C^{\ast}$-algebra, $C_r^{\ast}(\G \dtimes \Rr)$. It can be shown however that elements of this ideal are included in the class of kernels which are transversely continuous and smooth along every orbit of the groupoid, by an application of the right-regular representation, i.e.
\[
\Psi_{\G \dtimes \Rr}^{-\infty} \subset C_{\mathrm{orb}}^{\infty, 0}(\G \dtimes \Rr) \cap C_r^{\ast}(\G \dtimes \Rr), 
\] 
cf. \cite[Thm. 10]{LV}. The first component in the definition of the large calculus is formed out of symbols which are smooth along the $s$-fibers, of type $C^{\infty, 0}$. Altogether, therefore we have the inclusion
\begin{align}
\widehat{\Psi}_{ai}^m(\G \dtimes \Rr, \overline{r}^{\ast} E) &\subset C^{\infty, 0}(\G \dtimes \Rr, \overline{r}^{\ast} E) \cap C_r^{\ast}(\G \dtimes \Rr, \overline{r}^{\ast} E). \label{LongContInclusion} 
\end{align}

A further refinement of this inclusion is possible when we utilize the Volterra property and the Paley-Wiener theorem. Then we get an inclusion into the so-called Paley-Wiener algebra intersected with the class of $C^{\infty, 0}$ kernels, by virtue of an application of the fiberwise Fourier transform. We return to this point at a later stage. Moreover, we will exhibit for each pseudodifferential operator $T \in \widehat{\Psi}_{ai}^{m}(\G \dtimes \Rr, \overline{r}^{\ast} E)$ a $1$-parameter family $\{T_{\hbar}\}_{\hbar > 0}$ with $T_{1} = T$. To that end we construct a suitable deformation groupoid for our filtration of parabolic signature which constitutes the proper weighted generalization of the tangent groupoid, cf. \cite{vEY}. This groupoid with respect to the filtration $H$ is given as a set as 
\begin{align}
(\G \dtimes \Rr)^{\pa} &= [(\G \dtimes \Rr) \times \Rr_{\hbar}^{\ast}] \cup [(\G \dtimes \Rr)_{H} \times \{0\}_{\hbar}] \rightrightarrows M \times \Rr_{\hbar}. \label{DeformationGroupoid}
\end{align}

We utilize throughout this work the generalized exponential map to exhibit a localizer and a so-called standard system of coordinates wherein which we can express elements of the small compactly supported calculus, while elements of the Vassout ideal are understood via the right-regular representation. The important properties of the calculus, such as the closedness with respect to compositions and the parametrix construction, can then be verified in the standard coordinates. 




\section{The heat semigroup}


We recall the construction of the heat semigroup on a Lie groupoid, cf. \cite{BLS}. Let $\E := C_r^{\ast}(\G, r^{\ast} E)$ be the $C_r^{\ast}(\G)$ module. We also recall the definition of the Sobolev Hilbert modules, cf. \cite{Skandalis_course_2015}. 
Denote the calculus of pseudodifferential operators with compactly supported Schwartz kernels by $\Psi_{\G, c}^{\ast}$. Fix the \emph{generator} $P \in \Psi_{c, \G}^{1}$ and define the scale of Sobolev modules as the $C_r^{\ast}(\G)$-modules $H^t := \overline{C_c^{\infty}(\G)}^{\scal{\cdot}{\cdot}_t}, \ t \in \Rr$, where 
\[
\scal{f}{g}_t = \scal{(\Id + P^2)^t f}{g} \in C_r^{\ast}(\G), 
\]

and $\scal{a}{b} = a^{\ast} b$ is the inner product on the right. Given $Q \in \Psi_{\G, c}^m$ we have that $Q \in \L(H^t, H^{t-m})$, where the latter class of so-called \emph{morphisms} are the $C_r^{\ast}(\G)$-linear maps that admit an adjoint, or equivalently, the $C_r^{\ast}(\G)$-linear maps that have orthocomplemented graphs. Set:
\begin{align}
\|Q\|_{t\to t-m} := \|(1+P^2)^{(t-m)/2}Q(1+P^2)^{-t/2}\|_{\L(\E)}. \label{SovolevNorm}
\end{align}

Define the Vassout smoothing operators $\Psi_{\G}^{-\infty}$ (cf. \cite{Vassout}) via
\begin{align}
\E^{\infty} &:= \bigcap_k \Dom(P^k), \ \Psi_{\G}^{-\infty} := \E^{\infty} \cap (\E^{\infty})^{\ast}. \label{VassoutIdeal} 
\end{align}
Note that $\Psi_{\G, c}^{0}$ constitutes a subalgebra of the multiplier algebra $M(C_r^{\ast}(\G))$ and $\Psi_{\G, c}^{<0}$ is contained in $C_r^{\ast}(\G)$. In addition,
\[
H^t = \overline{\Psi_{\G, c}^{<-t}}^{\|\cdot\|_t}, \ \E^{\infty} = \bigcap_{t} H^t \subset C_r^{\ast}(\G). 
\]

The compact morphisms $\E \to \F$ between Hilbert $A$-modules are defined as the linear subspace spanned by the usual rank one projections; the embedding $H^t \hookrightarrow H^{t+m}$ is always compact in that sense, for each $m > 0$. A densely defined operator $T \colon \E \to \E$ for a Hilbert $A$-module $\E$, with densely defined adjoint is regular if its graph $G(T)$ is orthocomplemented as an $A$-submodule of $\E \oplus \E$. The topology of strict continuity on $\L(\E)$ is the weakest topology generated by the seminorms $T \mapsto \|T x\|, x \in \E$ and $T \mapsto \|T^{\ast} y\|, y \in \E$. It can be checked that $\L(\E)$ specified in this way is a $C^{\ast}$-algebra. If we view a $C^{\ast}$-algebra $A$ as a right Hilbert $A$-module with $\scal{a}{b} = a^{\ast} b$, then the multiplier algebra $M(A)$ identifies with the morphisms $\L(A)$. 

Consider an element $Q \in \Psi_{\G, c}^m(r^{\ast} E)$ for $m > 0$ that is self-adjoint, elliptic as an unbounded operator $\E \to \E$ and non-negative with respect to the canonical $L^2 = H^0$ inner product. Let $\pi \colon C_0(\Rr) \to \L(\E)$ be a nondegenerate representation such that we can find an extension $\widetilde{\pi}$ from $C(\Rr)$ (i.e. the regular operators on $C_0(\Rr)$) defined via $C_0(\Rr) \otimes_{\pi} \E \cong \E$ through $\widetilde{\pi}(f) = f \otimes_{\pi} \Id$ and with the property $\widetilde{\pi}(\Id_{\Rr}) = Q$. With respect to the identifications $C_b(\Rr) \cong M(C_0(\Rr))$ and $\L(\E) \cong M(\K(\E))$, we obtain a strictly continuous homomorphism by restriction
\[
\overline{\pi} = \widetilde{\pi}_{|C_b(\Rr)} \colon C_b(\Rr) \to \L(\E). 
\]

Then $\pi$ is uniquely determined by this condition, cf. \cite{LV}. 

\begin{Def}
Denote by $\Gamma$ the curve parametrized by $\Rr_{+} \ni t \mapsto -1 + t(1 \pm i)$. Define $f_{\lambda} \in C_0(\Rr)$ via $f_{\lambda}(x) = \alpha(x) (x - \lambda)^{-1}, \ \lambda \in \Gamma$. Here $\alpha$ is the continuous non-decreasing function defined by
\[
\alpha(x) = \begin{cases} 0, \ x < -\frac{1}{2}, \\ 1, \ x \geq -\frac{1}{4} \end{cases}.
\]

Then $\pi(f_{\lambda}) = (Q - \lambda)^{-1}$ and the \emph{heat kernel} is defined by the Dunford integral
\[
\exp(-tQ) := \frac{1}{2\pi i} \int_{\Gamma} e^{-t \lambda} (Q - \lambda)^{-1}\,d\lambda. 
\]

\label{Def:HeatSemigroup}
\end{Def}

\begin{Prop}
Let $Q \in \Psi_{\G,c}^m(r^*E)$ with $m > 0$ be symmetric on $C_c^\infty(\G ,r^*E)$, $\G$-elliptic, and nonnegative. Then its closure on $H^0$ is regular selfadjoint with $\mathrm{spec}(Q) \subset [0,\infty)$. For a given $k \in \Nn$ and any $u_0 \in H^k$, the initial value problem 
\[
(\partial_t + Q) u = 0, \ u(0) = u_0
\]

has a unique solution in $\bigcap_{0 \leq j \leq k} C^j([0, \infty), H^{k-j})$, given by $u(t) = \exp(-t Q)$. In addition, the heat kernel $\exp(-t Q)$ furnishes an $H^0 = L^2$ contractive semigroup that is contained in $\Psi_{\G}^{-\infty}(r^{\ast} E)$.
\label{Prop:HeatSemigroup}
\end{Prop}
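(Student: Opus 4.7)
The plan is to establish the proposition in four stages, each resting on a different piece of the Lie groupoid pseudodifferential framework reviewed above: regular self-adjointness and nonnegativity of the closure $\overline{Q}$, construction and contractivity of the Dunford semigroup, solvability of the initial value problem in the stated Sobolev scale, and finally membership of $\exp(-tQ)$ in the Vassout ideal (\ref{VassoutIdeal}).

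For the first stage, $\G$-ellipticity of $Q$ produces a parametrix $R \in \Psi_{\G,c}^{-m}$ with $QR - \Id,\ RQ - \Id \in \Psi_{\G}^{-\infty}$. Combined with symmetry on $C_c^\infty(\G, r^\ast E)$, this gives density of the ranges of $Q \pm i$ in $\E = H^0$ (the residuals being compact in the Hilbert module sense), so the Hilbert $C^\ast$-module version of the self-adjointness criterion for symmetric operators upgrades $\overline{Q}$ to a regular self-adjoint operator. The inequality $\langle Qu, u\rangle_0 \geq 0$ for $u \in C_c^\infty(\G, r^\ast E)$ extends to $\Dom(\overline{Q})$ by closedness, placing $\mathrm{spec}(\overline{Q}) \subset [0,\infty)$.

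For the second stage, the contour $\Gamma$ is bounded away from $[0,\infty)$ uniformly outside a compact set, yielding resolvent estimates $\|(\overline{Q}-\lambda)^{-1}\|_{\L(\E)} \leq C(1+|\lambda|)^{-1}$ along $\Gamma$, so the integral of Definition~\ref{Def:HeatSemigroup} is norm-convergent in $\L(\E)$. Standard contour manipulation with the resolvent identity then proves the semigroup law, the generator identity $\partial_t \exp(-t\overline{Q}) = -\overline{Q}\exp(-t\overline{Q})$, and $\|\exp(-t\overline{Q})\|_{\L(\E)} \leq 1$ by nonnegativity of $\overline{Q}$. For the third stage, set $u(t) := \exp(-t\overline{Q})u_0$ with $u_0 \in H^k$. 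Differentiating the Dunford integral gives $\partial_t^j u(t) = (-\overline{Q})^j u(t)$, and $\G$-ellipticity together with (\ref{SovolevNorm}) yields $\Dom(\overline{Q}^j) \supset H^{mj}$ with uniform bounds on $\|\overline{Q}^j\|_{k \to k-mj}$, so $u \in C^j([0,\infty), H^{k-j})$ for each $0 \leq j \leq k$. Uniqueness follows from the energy estimate $\tfrac{d}{dt}\langle v, v\rangle_0 = -2\langle \overline{Q} v, v\rangle_0 \leq 0$ applied to the difference of two solutions.

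The main obstacle is the last stage: verifying $\exp(-tQ) \in \Psi_{\G}^{-\infty}(r^\ast E)$, i.e. that the operator produced by the Dunford integral actually lies in the Vassout ideal and not merely in $\L(\E)$. The strategy is to differentiate the integral under the integral sign to obtain $\overline{Q}^k \exp(-t\overline{Q}) \in \L(\E)$ with norm bounded by $c_k t^{-k}$ for every $k$, so that the image of $\exp(-t\overline{Q})$ lies in $\bigcap_k \Dom(\overline{Q}^k)$; by $\G$-ellipticity of $Q$ this intersection coincides with $\bigcap_t H^t = \E^\infty$, and self-adjointness gives the same for the adjoint. To then identify the operator $\exp(-tQ) \in \L(\E)$ with an element of $\E^\infty \cap (\E^\infty)^\ast$, I would invoke the nondegenerate representation $\pi$ of the paragraph preceding Definition~\ref{Def:HeatSemigroup}: since $x \mapsto e^{-tx}\alpha(x)$ lies in $C_0(\Rr)$, the uniqueness of $\pi$ together with the resolvent identity forces $\pi(e^{-t\cdot}\alpha) = \exp(-tQ)$, placing the latter inside the image of $\pi$ and therefore in the $C_r^\ast(\G)$-module, where the $P$-smoothness derived above realises it in (\ref{VassoutIdeal}). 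The delicate technical point is the compatibility of the $P$-scale of Sobolev modules with the resolvents $(\overline{Q}-\lambda)^{-1}$; this rests on commutator estimates between $(\Id + P^2)^{s/2}$ and these resolvents, using $\G$-ellipticity of both $P$ and $Q$ to absorb lower-order terms in the sense of (\ref{SovolevNorm}).
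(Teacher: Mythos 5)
Your proposal is correct and follows the same overall architecture as the paper's proof: regular self-adjointness of the closure, the Dunford integral over $\Gamma$ with the resolvent bound $\|(Q-\lambda)^{-1}\|\leq\|f_\lambda\|_\infty$, the semigroup law by contour manipulation with the resolvent identity, the initial value problem via the Sobolev scale $H^s \cong \Dom\bigl((\Id+Q)^{s/m}\bigr)$, and membership in the Vassout ideal by showing $P^k e^{-tQ}$ and $e^{-tQ}P^k$ are bounded morphisms. Two points where you diverge are worth noting. First, you prove contractivity directly from nonnegativity via the functional calculus norm bound $\|\pi(e^{-t\cdot}\alpha)\|\leq\sup_{[0,\infty)}|e^{-tx}|=1$, which presupposes the identification of the Dunford integral with the functional-calculus element (you supply this only in your last stage, via uniqueness of $\pi$ and the resolvent identity); the paper instead runs an explicit Hille--Yosida/Trotter approximation with the bounded generators $Q_\lambda=\lambda\Id-\lambda^2(Q+\lambda)^{-1}$, obtaining $\|e^{-tQ_\lambda}\|\leq 1$ and passing to the strong limit. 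Your route is shorter and arguably cleaner once the functional-calculus identification is in hand; the paper's route avoids that identification at the cost of the approximation argument. Second, you flesh out the regular self-adjointness of $\overline{Q}$ (parametrix from $\G$-ellipticity, dense range of $Q\pm i$, Lance's criterion), which the paper simply asserts. One small caution for your final stage: the Vassout ideal $\E^\infty\cap(\E^\infty)^\ast$ consists of \emph{elements} of $C_r^\ast(\G, r^\ast E)$ lying in every Sobolev module, so it is not quite enough that the range of $e^{-tQ}$ lies in $\E^\infty$; you need $\pi(e^{-t\cdot}\alpha)\in\K(\E)\cong C_r^\ast(\G, r^\ast E)$, which follows because the resolvents $(Q-\lambda)^{-1}$ are negative-order elements of the calculus and generate $\pi(C_0(\Rr))$ --- your ``therefore in the $C_r^\ast(\G)$-module'' is too quick as written, though the paper is no more explicit on this point.
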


\begin{proof}
We observe that $Q$ is closable and its closure is a regular selfadjoint unbounded morphism on $\E$ to which we apply the continuous functional calculus. We have $Q \in \L(H^{s + m}, H^{s})$ for the scale of Sobolev modules $\{H^s\}$ where $H^s$ is the domain of $(\Id + Q)^{\frac{s}{m}}$ with equivalent norm, by ellipticity. Denoting by $\pi : C_0(\Rr) \to \L(\E)$ the functional calculus associated with $Q$, we get $\pi(f_\lambda) = (Q-\lambda)^{-1}$ and $\|(Q-\lambda)^{-1}\|_{\E} \le \|f_\lambda\|_{\infty}\le c(1+|\Im \lambda|)^{-1}$ for some constant $c$. It follows that the integral above is absolutely convergent in $\L(\E)$, and standard calculations on this integral prove that $P^ke^{-tQ}$ and $e^{-tQ}P^k$ belong to $\L(\E)$ as well, for any $k\in\Nn$. By construction of the functional calculus, the map $\lambda \mapsto \pi(f_{\lambda})$ is strictly continuous, cf. \cite[Proposition 5.19]{Skandalis_course_2015}. By virtue of the identity
\[
\frac{(Q - \lambda)^{-1} - (Q - \mu)^{-1}}{\lambda - \mu} = \frac{(Q - \lambda)^{-1} (\mu - \lambda) (Q - \mu)^{-1}}{\lambda - \mu} = -(Q - \lambda)^{-1} (Q - \mu)^{-1} 
\]

we obtain as $\lambda \to \mu$ that the left hand side converges to $-(Q - \lambda)^{-2}$. Hence $\Cc \setminus \mathrm{spec}(Q) \ni \lambda \mapsto (Q - \lambda)^{-1}$ is holomorphic as a map $\Cc \setminus \mathrm{spec}(Q) \to \L(H^s)$. By the previous arguments we have $\exp(-t Q) \in \L(H^k)$ and $t \mapsto \exp(-t Q) f \in C(\Rr, H^{k})$ for any $f \in H^k$. Since $\frac{1}{t} (\exp(-t \lambda) - 1) \to \lambda$ as $t \to 0$ uniformly on compact subsets of $\Rr$, we get by \cite[Appendix]{Skandalis_course_2015} that $\frac{1}{t} (\exp(-t Q) - 1)$ converges to $Q$ strongly meaning
\[
\left\|\frac{1}{t} (\exp(-t Q) f - f) - Q f\right\|_{\E} \to 0, \ \text{as} \ t \to 0, \ \text{for all} \ f \in H^1.
\]

Hence $t \mapsto \exp(-t Q) f \in C^1(\Rr, \E) \cap C^0(\Rr, H^1)$ for all $f \in H^1$ and $\partial_t \exp(-t Q) f = -Q \exp(-t Q) f$ for all $f \in H^1$ and all $t \in \Rr$. Finally, we need to show that the heat kernel furnishes a semigroup and the $H^0 = L^2$ contractivity. Let $t > 0, s > 0$ and denote by $\Gamma'$ a curve that is shifted to the right slightly compared to $\Gamma$, then by Cauchy's theorem
\begin{align*}
e^{-Q t} e^{-Q s} &= (2 \pi i)^{-2} \int_{\Gamma} \int_{\Gamma'} e^{-\lambda t} (Q - \lambda)^{-1} e^{-\mu s} (Q - \mu)^{-1} \,d\mu \,d\lambda \\
&= (2\pi i)^{-2} \int_{\Gamma} \int_{\Gamma'} e^{-\lambda t - \mu s}(\mu - \lambda)^{-1} ((Q - \lambda)^{-1} - (Q - \mu)^{-1})\,d\mu \, d\lambda \\
&= (2\pi i)^{-1} \int_{\Gamma} e^{-\lambda (t + s)} (Q - \lambda)^{-1} \,d\lambda \\
&= e^{-Q (t + s)}.
\end{align*}

In the next to final line we made use of the identities
\[
\int_{\Gamma} e^{-\lambda t} (\mu - \lambda)^{-1}\,d\lambda = 0, \ \int_{\Gamma'} e^{-\mu s} (\mu - \lambda)^{-1}\,d\mu = 2\pi i e^{-\lambda s}.
\]

To show contractivity of the semigroup we can adapt a standard Hille-Yosida approximation argument to our setting, cf. e.g. \cite{Trotter}. We have 
\begin{align}
& (-\infty, 0) \subset \Cc \setminus \mathrm{spec}(Q), \label{HYRes} \\
& \|\lambda (Q + \lambda)^{-1}\| \leq 1  \label{HYBound}
\end{align}
for the regular selfadjoint operator $Q \geq 0$. Set 
\[
Q_{\lambda} := \lambda \Id - \lambda^2 (Q + \lambda)^{-1} \in \L(\E), \ \lambda > 0
\] 
and note that $Q_{\lambda} = \lambda Q (Q + \lambda)^{-1}$. Since the approximant is a bounded operator in $\L(\E)$, we obtain by virtue of the functional calculus of the $C^{\ast}$-algebra $\L(\E)$ the estimate 
\[
\|e^{-t Q_{\lambda}}\| \leq e^{-\lambda t} e^{\| \lambda^2 (Q + \lambda)^{-1}\| t} \leq e^{-\lambda t} e^{\lambda t} = 1. 
\]

For each $t \geq 0$ define the semigroup 
\[
S_t x := \lim_{\lambda \to \infty} e^{-t Q_{\lambda}} x
\] 
for $x \in \E$. One checks the uniform convergence on bounded intervals using \eqref{HYBound} which furnishes a contraction, since each $e^{-t Q_{\lambda}}$ is a contraction. One then exhibits $Q$ as a generator of $(S_t)_{t \geq 0}$ using \eqref{HYRes}, which by virtue of uniqueness of the solution to the heat equation shows that $S_t$ reproduces the heat kernel. 
\end{proof}

\section{Anisotropic Operators}




The aim is to introduce a Volterra calculus for parabolic PDE posed on a Lie groupoid. We provide a parametrix construction, so that the fundamental solutions of weakly parabolic PDE, stated with respect to the boundary tangential vector fields on the corresponding Lie algebroid, are contained within the calculus. 





The anisotropic operators arise from quantization of continuous families of Volterra symbols. These symbol classes are families of symbol spaces along the fibers of a submersion. Consider the source map $\overline{s} \colon \G \dtimes \Rr \to (\G \dtimes \Rr)_0 = M$ defined by $\overline{s}(\gamma, t) = (s(\gamma), 0)$ and typical fiber $\overline{s}^{-1}(x) = \G_x \times \Rr$. The Lie algebroid is $\A(\G \dtimes \Rr) \cong \A(\G) \oplus \Rr_M$ where $\Rr_M = M \times \Rr$ is the trivial bundle over $M$. 
A groupoid parametrization of $\overline{r}, \overline{s} \colon \G \dtimes \Rr \rightrightarrows M$, centered at $x_0 \in M$, is given by an open set $\G \dtimes \Rr \supset \Omega \cong U_{\overline{r}, \overline{s}} \times V$ for $U_{\overline{r}, \overline{s}} =: U \subset M$ open and the data $(\varphi, \psi)$ where $\varphi \colon U \to M, \ \psi \colon U \times V \to \G$ with the properties, cf. \cite{LR}
\begin{align*}
&\psi(0, 0) = x_0, \ r(\psi(u, v)) = \varphi(u) \\
&\psi(U \times \{0\}) = \psi(U \times V) \cap M,
\end{align*}

where the first two conditions imply $\varphi(u) = \psi(u, 0)$.

\begin{Def}
Let $\Cc_- := \{\tau \in \Cc : \Im \tau < 0\}$ and write the parabolic dilations as $\delta_{\lambda}(\xi, \tau) := (\lambda \xi, \lambda^2 \tau)$ for $\lambda > 0$ on the fibers of $\A^{\ast} \oplus \Rr_M$. Set $\|(\xi, \tau)\|_{ai} := (|\xi|^2 + |\tau|)^{\frac{1}{2}}$.  

Fix a groupoid parametrization $\Omega \cong U \times V$ of $\overline{r}, \overline{s} \colon \G \dtimes \Rr \rightrightarrows M$, with $U\subset M$ open and $V$ an open neighbourhood of the unit in a longitudinal chart of $\G_{x}$ (via $\psi$). For $u\in U$ write $x=\varphi(u)$ and identify the longitudinal slice
\[
V_x := \{y = \psi(u, v) \in \G_x : \ v \in V\}
\]

near the unit in $\G_x$.

\begin{enumerate}
\item For $m \in \Rr$, define $S_{ai, m}(\A^{\ast} \oplus \Rr_M)$ to be the space of families 
\[
q = (q_x)_{x \in M}, \qquad q_x \colon V_x \times (\A_x^{\ast} \times \Cc_-) \setminus \{0\} \to \Cc,
\]
such that, for each $x \in M$:
\begin{enumerate}
\item $q_x$ is $C^{\infty}$ in $\xi \in \A_x^{\ast} \setminus \{0\}$ and holomorphic in $\tau \in \Cc_-$. 

\item For all $\lambda > 0$, $q_x(y, \delta_{\lambda}(\xi, \tau)) = \lambda^m q_x(y, \xi, \tau)$. 

\item For every chart $\Omega \cong U \times V$ as above, the map
\[
u \mapsto q_{\varphi(u)}(\psi(u, \cdot), \cdot, \cdot)
\]

is continuous from $U$ into the Fr\'echet space of $C^{\infty}$ functions of $(y, \xi)$ on $V \times (\A_x^{\ast} \setminus \{0\})$ that are holomorphic in $\tau \in \Cc_-$. 
\end{enumerate}

\item A family $q = (q_x)_{x \in M}$ lies in $S_{ai, ph}^m(\A^{\ast} \oplus \Rr_M)$ if, in every chart $\Omega \cong U \times V$ and for each $x = \varphi(u) \in U$, there are fiberwise homogeneous components $q_{x, m-j} \in S_{ai, m-j}(\A^{\ast} \oplus \Rr_M)$ such that, for every $N \in \Nn$, compact $K \subset V$, multi-indices $\alpha$ in $y$, $\beta$ in $\xi$, and $k \in \Nn$,
\[
\left\|\partial_y^{\alpha} \partial_{\xi}^{\beta} \partial_{\tau}^{k} \left(q_{\varphi(u)}(\psi(u,v), \xi, \tau) - \sum_{j < N} q_{\varphi(u), m-j}(\psi(u,v), \xi, \tau) \right)\right\| \leq C \|(\xi, \tau)\|_{ai}^{m - |\beta| - 2k}.
\]

Each $q_{x,m-j}$ is $C^\infty$ in $(y,\xi)$, holomorphic in $\tau \in \Cc_-$, and parabolically homogeneous of degree $m-j$ in $(\xi,\tau)$. The constants $C$ may depend on $(\alpha,\beta,k,K)$ but are uniform over $v\in K$.
\end{enumerate}
\label{Def:anisotropicSymbol}
\end{Def}

Let $S(\A^{\ast} \oplus \Rr_M)$ (resp. $S(\A \oplus \Rr_M)$) denote fiberwise Schwartz functions on the vector bundle. We fix the convention for the fiberwise Fourier transform
\[
\Ff f(\xi, \tau) = (2 \pi)^{-(d+1)} \int_{\overline{\pi}(\xi, \tau) = \pi(\zeta, t)} e^{-i \scal{\xi}{\zeta} - i \tau t} f(\zeta, t)\,d\zeta. 
\]

\begin{Lem}[cf. \cite{Greiner}]
Let $m \in \Rr$ and $q \in S_{ai, m}(\A^{\ast} \oplus \Rr_M)$. Then there exists a unique family of tempered distributions
\begin{align*}
& g=(g_x)_{x \in M}, \ g_x(y; \xi, \tau) \in S'(\A_x^{\ast} \times \Rr),  
\end{align*}

such that 
\begin{enumerate}
\item[(i)] For each $x$, $g_x(y;\xi,\tau)=q_x(y,\xi,\tau)$ on
$(\xi,\tau)\neq (0,0)$.
\item[(ii)] For each $x$ and $\lambda>0$,
\[
g_x\left(y;\,\delta_\lambda(\xi,\tau)\right) = \lambda^{m}\,g_x(y;\,\xi,\tau)
\quad\text{in the sense of distributions in }(\xi,\tau).
\]
\item[(iii)] Writing
\[
K_x(y;\zeta,t)\;:=\;\Ff^{-1}[g_x](y;\zeta,t),
\]
we have $K_x(y;\zeta,t)=0$ for all $t<0$.
\item[(iv)] In any groupoid chart
$\Omega \cong U\times V$ and bundle trivialization of $\A^\ast$, all longitudinal seminorms in $(y,\xi)$ of $g_x$ (and of $K_x$ for $t>0$) depend continuously on $x\in U$.
\end{enumerate}
Moreover, the construction is local and chart independent, and it is functorial under
changes of longitudinal coordinates and bundle trivializations. If, in addition,
$q\in S^{m}_{ai,ph}(\A^\ast\oplus\Rr_M)$, then $g$ admits a
polyhomogeneous expansion $g \sim \sum_{j \geq 0} g_{m-j}$ with homogeneous components
corresponding to those of $q$, and $K=\Ff^{-1}[g]$ is Volterra (supported in $\{t \geq 0\}$)
and admits the associated asymptotic expansion as $t\downarrow 0^+$.
\label{Lem:Greiner}
\end{Lem}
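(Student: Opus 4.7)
The plan is to construct $g_x$ as a distributional boundary value of $q_x$ from the half-plane $\Cc_-$ in the variable $\tau$, and then to verify (i)--(iv) together with functoriality and the polyhomogeneous expansion by transplanting the classical Greiner argument to the fiberwise Lie-algebroid setting. I will work in a fixed groupoid chart $\Omega\cong U\times V$ with a trivialization of $\A^{\ast}$, so that $q_x(y,\xi,\tau)$ is smooth in $(y,\xi)$ for $\xi\neq 0$ and holomorphic in $\tau\in\Cc_-$; the continuous dependence on $x\in U$ encoded in Def.~\ref{Def:anisotropicSymbol}(1)(c) then propagates automatically provided each step respects the Fr\'echet topology there.

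\emph{Step 1 (extension).} For $\epsilon>0$ every point $(\xi,\tau-i\epsilon)$ lies in the domain of $q_x$, and the parabolic homogeneity together with boundedness on the compact parabolic sphere $\{|\xi|^2+|\tau|=1,\ \Im\tau\leq -c\}$ yields the uniform polynomial bounds
\[
\big|\partial_y^\alpha\partial_\xi^\beta\partial_\tau^k q_x(y;\xi,\tau-i\epsilon)\big|\le C_{\alpha\beta k}\big(|\xi|^2+|\tau|+\epsilon\big)^{(m-|\beta|-2k)/2}
\]
for $\epsilon\in(0,1]$. Standard boundary-value theory for tempered holomorphic functions in a tube then produces the tempered distribution $g_x(y;\xi,\tau):=\lim_{\epsilon\downarrow 0}q_x(y;\xi,\tau-i\epsilon)$ in $S'(\Rr^{d+1})$, smooth in $y\in V$. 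Property (i) is immediate because $q_x$ is continuous at every $(\xi,\tau)\neq(0,0)$ with $\tau\in\Rr\cup\Cc_-$ via its holomorphy; property (ii) follows by absorbing the $\delta_\lambda$-scaling into the limit through the substitution $\epsilon\leadsto\lambda^{-2}\epsilon$.

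\emph{Step 2 (Volterra property, uniqueness, functoriality, expansion).} For (iii), pair $K_x=\Ff^{-1}[g_x]$ against $\phi\in S(\Rr^{d+1})$ with $\supp\phi\subset\{t<0\}$. Cauchy's theorem in the $\tau$-plane (applied within $\Cc_-$ where $q_x$ is holomorphic) allows shifting the contour $\Rr_\tau\to\Rr_\tau-iA$ for any $A>0$; on the shifted line the integrand carries a factor $|e^{i(\tau-iA)t}|=e^{At}$ which tends to $0$ uniformly on $\supp\phi$ as $A\to\infty$. A dyadic decomposition of $\phi$ in $(\zeta,t)$ combined with the Schwartz decay of $\widehat\phi$ dominates the $(|\xi|^2+|\tau|+A)^{m/2}$ growth, so dominated convergence yields $\langle K_x,\phi\rangle=0$, hence $\supp K_x\subset\{t\ge 0\}$. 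Uniqueness is cheap: two Volterra extensions differ by a distribution of the form $\sum_{|\alpha|\leq N}c_\alpha(y)\partial^\alpha\delta_0$ supported at $(\xi,\tau)=0$, whose inverse Fourier transform is a polynomial in $(\zeta,t)$; the support condition $\{t\ge 0\}$ forces that polynomial, and each $c_\alpha$, to vanish. Chart independence and functoriality are manifest since the boundary-value construction is local in $(y,\xi)$ and only involves the single direction $\tau\in\Cc_-$, so it commutes with smooth changes of longitudinal coordinates, with linear changes of bundle trivialization of $\A^{\ast}$, and with restriction along source fibres; continuity in $x$ (property (iv)) is preserved because the estimates of Step 1 and the contour shift depend only on seminorms controlled by Def.~\ref{Def:anisotropicSymbol}(1)(c). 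Finally, for $q\in S^m_{ai,ph}$ the construction applies term by term to $q\sim\sum q_{m-j}$ and yields $g\sim\sum g_{m-j}$ with each $g_{m-j}$ parabolically homogeneous of degree $m-j$ and Volterra; the kernel $K_{m-j}$ is then parabolically homogeneous of degree $-(d+2)-(m-j)$, and setting $\lambda=t^{-1/2}$ in the homogeneity relation produces the claimed asymptotic expansion of $K$ as $t\downarrow 0^+$.

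The main obstacle I anticipate is the dominated-convergence step inside the Volterra argument: after pushing $\tau$ deep into the lower half-plane, one must show that the polynomial growth of $q_x$ on the shifted contour is genuinely controlled in $L^1((\xi,\tau))$ against $\widehat\phi$ for an arbitrary Schwartz $\phi$ with $\supp\phi\subset\{t<0\}$, uniformly as $A\to\infty$. The dyadic-scale argument sketched above---or equivalently a preliminary reduction to tensor-product test functions followed by closing under density---does the real work here, since the naive pointwise estimate leaves an unbounded factor in the $(\xi,\tau)$ integral. Everything else is routine once this uniform bound is in place.
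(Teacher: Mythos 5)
Your overall strategy (distributional boundary value from $\Cc_-$, Paley--Wiener contour shift for the Volterra support, uniqueness via a polynomial supported in a half-space, termwise treatment of the polyhomogeneous expansion) is essentially the paper's, and your uniqueness argument is in fact slightly cleaner than the one given there. However, Step 1 has a genuine gap at low orders. Your majorant $(|\xi|^2+|\tau|+\epsilon)^{m/2}$ is not uniformly locally integrable near $(\xi,\tau)=(0,0)$ once $m\le-(d+2)$: the parabolic homogeneous dimension is $Q=d+2$, and $\|(\xi,\tau)\|_{ai}^{s}$ is locally integrable only for $s>-Q$. Hence dominated convergence does not produce the limit $\lim_{\epsilon\downarrow0}q_x(\cdot\,,\cdot-i\epsilon)$ in $S'$, and the ``standard boundary-value theory for tempered holomorphic functions in a tube'' you invoke does not apply: $q_x$ is holomorphic only in the single variable $\tau$, and the degeneration of your bound as $\xi\to0$ is precisely what such tube theorems cannot absorb. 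This is not a peripheral case --- the lemma is applied in Proposition~\ref{Prop:VolterraCalc}(ii) to homogeneous parametrix terms of order $-m-j$ for every $j$, so arbitrarily negative degrees occur.

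The paper closes this gap with a degree-raising device you are missing: choose the minimal $j$ with $m+2j>-Q$ and form the $j$-fold $\tau$-antiderivative $f_j$ of $q_x$ along paths in $\Cc_-$, normalized to vanish as $|\tau|\to\infty$. Then $f_j$ is parabolically homogeneous of degree $m+2j>-Q$, hence locally integrable, its boundary value exists by honest dominated convergence, and one sets $g_x:=\partial_\tau^{j}f_j$ in the distributional sense. The same device repairs the support argument: Paley--Wiener is applied to $f_j$, where the required $L^1$ control on the shifted contour is actually available, and the support in $\{t\ge0\}$ survives because $\partial_\tau^{j}$ becomes multiplication by $(it)^{j}$ on the kernel side. (Equivalently, the paper regularizes by $f_\varepsilon=(1+i\varepsilon\tau)^{-N}f$ with $N$ large, which is the rigorous version of your $\tau-i\epsilon$ limit.) So you correctly sensed that a uniform integrability statement ``does the real work,'' but you located it in the wrong place --- in the contour shift rather than in the existence of the extension itself --- and the dyadic decomposition you propose does not supply it; the antiderivative (or the $(1+i\varepsilon\tau)^{-N}$ regularization) does.
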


\begin{proof}[Proof sketch]
For each fixed $(x,y)$ set $f(\xi,\tau) := q_x(y,\xi,\tau)$. Then $f$ is smooth on $(\A_x^{\ast}\times \Cc_-) \setminus \{0\}$, holomorphic in $\tau \in \Cc_-$, and parabolically homogeneous of degree $m$, i.e.\ $f(\lambda\xi, \lambda^2\tau) = \lambda^m f(\xi,\tau)$. Recall that $d = \mathrm{rk}\,\A_x$ and set $Q:=d+2$. Choose the minimal $j \in \Nn_0$ with $m+2j>-Q$.
Define the $j$-fold $\tau$-antiderivative
\[
f_j(\xi,\tau) := i^{j} \int_{\infty}^{\tau} \int_{\infty}^{\sigma_{j-1}} \cdots \int_{\infty}^{\sigma_1} f(\xi, \sigma_0)\, d\sigma_0 \cdots d\sigma_{j-1},
\]
where the integration runs along any piecewise $C^1$ path inside $\Cc_-$ and the constant of integration is fixed by requiring $f_j(\xi,\tau) \to 0$ as $|\tau| \to \infty$ within $\Cc_-$. Then $f_j$ is holomorphic in $\tau$, homogeneous of degree $m + 2j$, and $f_j \in L^1_{\mathrm{loc}}$ near $(\xi,\tau) = (0,0)$ because $m+2j > -Q$. Set
\[
g_x(y;\xi,\tau) := \partial_\tau^{j} f_j(\xi, \tau) \in S'(\A_x^\ast \times \Rr),
\]
which is a tempered distribution, homogeneous of degree $m$, and agrees with $f$ on $((\A_x^\ast\times\Cc_-) \setminus \{0\})$. By Paley-Wiener, $f_j$ being a boundary value from the open half-plane $\Cc_-$ implies $\F^{-1}_\tau[f_j](t)$ is supported in $\{t \geq 0\}$; applying $\partial_\tau^{j}$ corresponds to multiplication by $(it)^j$, hence preserves this support.
Equivalently, one may regularize by
\[
f_\varepsilon(\xi,\tau) := (1 + i \varepsilon \tau)^{-N} f(\xi,\tau), \ N \ \text{suff. large}, \ \varepsilon>0,
\]
so that $\supp \F^{-1}_\tau[f_\varepsilon] \subset \{t \geq 0\}$ for each $\varepsilon$, and $f_\varepsilon \to g_x(y;\cdot,\cdot)$ in $S'$ as $\varepsilon \downarrow 0$.
Let $K_x(y;\zeta,t): =\F^{-1}_{(\xi,\tau) \to (\zeta,t)}[g_x(y;\xi,\tau)]$. From the preceding paragraph, $K_x(y;\zeta,t) = 0$ for $t < 0$. This gives the claimed Volterra support after also taking the ordinary $\xi \leftrightarrow \zeta$ transform. Any two such extensions differ by a homogeneous distribution whose inverse $\tau$-Fourier transform is supported both in $\{t \geq 0\}$ and in $\{t \leq 0\}$, hence must vanish.
\end{proof}


\begin{Def}
A compactly supported anisotropic (possibly polyhomogeneous) operator $T \in \Psi_{ai, c}^{m}(\G \dtimes \Rr)$ is given by a family $T = (T_x)_{x \in M}$ such that 

\emph{i)} For all groupoid parametrizations $\Omega \cong U \times V$ with $U \subset M$ open and all $\kappa_1, \kappa_2 \in C_c^{\infty, 0}(\G \dtimes \Rr)$ with support in $\Omega$, the operator $\kappa_1 T \kappa_2$ is a compactly supported $C^{0}$-family of anisotropic (possibly polyhomogeneous) pseudodifferential operators of order $m$.

\emph{ii)} For each $\kappa_1, \kappa_2$ as above and with disjoint support, $\kappa_1 T \kappa_2 \in \Psi_{ai}^{-\infty}(\{x\} \times V)$. 
\label{Def:anisotropicOperator}
\end{Def}

Fix an algebroid connection $\nabla$ on $\A(\G)\oplus \Rr_M$ and denote the generalized exponential map by
\[
\Exp^\nabla \colon \A(\G)\oplus \Rr_M \longrightarrow \G\dtimes \Rr.
\]
Quantization is performed in exponential coordinates by the usual oscillatory integral with phase compatible with the parabolic dilations, yielding a map
\[
\Op_{ai} \colon S^{m}_{ai,ph}(\A^{\ast} \oplus \Rr_M)\ \longrightarrow\ \Psi^{m}_{ai,c}(\G \dtimes \Rr),
\]
well-defined modulo $\Psi^{-\infty}_{ai}$ and compatible with the $C^{\infty,0}$ structure.

Let $\mathcal{U} \subset \A\oplus \Rr_M$ be an open neighbourhood of the zero section
and $\mathcal{V} \subset \G \dtimes \Rr$ an open neighbourhood of the unit space such that
the generalized exponential map
\[
\Exp^{\nabla} \colon \mathcal{U} \iso \mathcal{V}
\]
is a diffeomorphism. Fix a groupoid parametrization centred at $x_0\in M$:
an open chart $\Omega \subset \G\dtimes \Rr$ with a diffeomorphism
\[
\psi \colon U\times V \iso \Omega \cap \mathcal{V},
\]
where $U \subset M$ and $V$ is an open neighbourhood of the unit in the longitudinal
coordinates of $\G_{x}$. Likewise choose an algebroid chart
\[
\theta \colon U \times W \iso \Omega \cap \mathcal{U},
\]
with $W$ an open neighbourhood of the zero section in $(\A\oplus \Rr_M)|_U$.
Define
\[
\alpha := \psi^{-1} \circ \Exp^{\nabla} \circ \theta \colon U \times W \longrightarrow U \times V.
\]
Then $\alpha(u,0) = (u,0)$, and for each $x = \varphi(u)$ the restriction
$\alpha_x \colon W_x\to V_x$ maps the fiber $(\A_x\oplus \Rr)\cap W$ diffeomorphically onto
$(\G_x\times \Rr) \cap V$ with $d\alpha_x|_{0} = \Id_{\A_x\oplus \Rr}$.
We have the commuting diagram
\[
\begin{tikzcd}[row sep=huge, column sep=huge, text height=2ex, text depth=0.5ex]
U\times W \arrow[d, "\theta"'] \arrow[r, "\alpha"] & U\times V \arrow[d, "\psi"] \\
\Omega\cap \mathcal U \arrow[r, "\Exp^\nabla"'] & \Omega\cap \mathcal V
\end{tikzcd}
\]

Write
\[
\Hom(\overline{r}^{\ast} E) := \overline{r}^{\ast} E\otimes (\overline{s}^{\ast}E)^{\ast} \longrightarrow \G \dtimes \Rr
\]
for the coefficient bundle of kernels. Shrinking $\Omega$ if necessary, we trivialize
$\Hom(\overline{r}^{\ast} E)$ over $\Omega \cap \mathcal{V}$ so that for each
$(\gamma,t) \in \Omega \cap \mathcal{V}$ there is a canonical identification
\[
\vartheta_{(\gamma,t)} \colon E_{\overline{r}(\gamma,t)} \otimes E_{\overline{s}(\gamma, t)}^{\ast}
\iso \End(E_x), \ x = \overline{s}(\gamma, t),
\]
e.g. by parallel transport along the base in $U$.

Using the charts $(\psi, \theta)$ and the trivialization $\vartheta$, we obtain a
canonical topological isomorphism 
\[
\Theta_{\psi, \theta} \colon 
\Psi_{ai}^{-\infty}\left(\overline{s}^{-1}(U) \cap \mathcal{V}, \overline{r}^{\ast} \Hom(E)\right)
\iso
\Psi_{ai}^{-\infty}\left(U \times W, \End(E_x)\right),
\]
defined by pulling back via $\psi^{-1}$, transferring $V$- to $W$-coordinates with $\alpha$,
and applying $\vartheta$ to identify coefficients with $\End(E_x)$, see also \cite{NWX}.

\begin{Prop}
Let $Q\in \Psi^{m}_{ai,c}(\G \dtimes \Rr, \overline{r}^{\ast} E)$ and let
$k_Q$ be its compactly supported Schwartz kernel (with values in
$\Hom(\overline{r}^{\ast} E)$). Then, shrinking $\Omega$ if necessary, there exists
a symbol
\[
q \in S^{m}_{ai}\left(U \times \Rr^{d+1}; \End(E_x)\right),
\ d := \mathrm{rk}\,\A_x,
\]
such that, in the local coordinates $(u,\zeta,t)\in U \times W$ (with $\zeta \in \A_x$),
\[
\Theta_{\psi,\theta}(k_Q)\ =\ k_q,
\]
where
\[
k_q(u;\zeta,t) = (2\pi)^{-(d+1)} \int_{\Rr^{d}} \int_{\Rr} e^{-i(\scal{\zeta}{\xi} + t \tau)} q(u,\xi,\tau)\, d\xi\, d\tau \in \End(E_x).
\]
Equivalently,
\[
k_{Q|_{\Omega \cap \mathcal{V}}} = \Theta_{\psi,\theta}^{-1}(k_q).
\]
The symbol $q$ is unique modulo $S^{-\infty}_{ai}$ and depends continuously on $Q$.
If $Q$ is polyhomogeneous, then $q$ is classical/polyhomogeneous in the anisotropic
sense.
\label{Prop:localizer}
\end{Prop}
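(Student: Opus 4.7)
The plan is to transfer the compactly supported kernel $k_Q$ to the algebroid side via the generalized exponential map $\Exp^\nabla$, take the fiberwise Fourier transform, and identify the result as an anisotropic symbol by means of Definition 3.1 and Lemma 3.2. First I would cut off $k_Q$ by a function $\chi \in C_c^{\infty,0}(\G \dtimes \Rr)$ supported in $\Omega \cap \mathcal{V}$ and equal to $1$ on a neighbourhood of the unit space, so that by Definition 3.3(ii) the complementary piece $(1-\chi) k_Q$ is a smoothing kernel and contributes only to the indeterminacy $S^{-\infty}_{ai}$ at the end. The localized kernel $\chi \cdot k_Q$ is then pulled back through $\Theta_{\psi,\theta}$ to produce a compactly supported section on $U \times W$ with values in $\End(E_x)$, conormal along the zero section and $C^{\infty,0}$-continuous in $u$.

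Second, I would define
\[
q(u,\xi,\tau) := \int_{W_x} e^{i(\scal{\zeta}{\xi} + t\tau)}\, \Theta_{\psi,\theta}(\chi \cdot k_Q)(u;\zeta,t)\, d\zeta\, dt,
\]
so that Fourier inversion gives $k_q = \Theta_{\psi,\theta}(k_Q)$ modulo smoothing. Definition 3.3(i) guarantees that the pulled back kernel is a $C^0$-family of compactly supported anisotropic pseudodifferential kernels on $U \times W$ of order $m$; applying the $(\zeta,t)$-Fourier transform and invoking Lemma 3.2 (Greiner), the result lies in $S^m_{ai}(U \times \Rr^{d+1}; \End(E_x))$. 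Transverse continuity in $u$ is preserved because the diffeomorphism pullback via $\alpha$, trivialization via $\vartheta$, and fiberwise Fourier transform are continuous with respect to the relevant Fréchet topologies.

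Uniqueness modulo $S^{-\infty}_{ai}$ is then immediate, since two symbols producing the same transferred kernel up to smoothing have a difference whose inverse Fourier transform is smooth and rapidly decreasing on fibers, hence lies in $S^{-\infty}_{ai}$. Continuity of the correspondence $Q \mapsto q$ follows from the continuity of all constituent maps. Polyhomogeneity transfers by the last part of Lemma 3.2: if $Q$ is polyhomogeneous then the symbol admits an asymptotic expansion $q \sim \sum_j q_{m-j}$ with components parabolically homogeneous of degree $m-j$.

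The main obstacle is that $\alpha = \psi^{-1}\circ \Exp^\nabla \circ \theta$ only linearizes the groupoid structure to first order, with $\alpha_x(v) = v + O(|v|^2)$ and $d\alpha_x|_0 = \Id$. Stability of the class $S^m_{ai,ph}$ under such fiberwise diffeomorphisms fixing the zero section needs verification via a parabolic Taylor expansion: writing the remainder as vanishing to parabolic order at least two, one checks that the pullback preserves both parabolic homogeneity modulo lower order components and the Volterra (holomorphic in $\tau \in \Cc_-$) property, since the remainder preserves the half-space $\{t \geq 0\}$ to leading order. A further subtle point is ensuring the $C^{\infty,0}$ family structure along $M$ is inherited by the symbol; this propagates through the symbol calculus because $\alpha$ and $\vartheta$ are themselves $C^0$ in the transverse variable $u$ and smooth along longitudinal fibers.
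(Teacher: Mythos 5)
The paper states Proposition~\ref{Prop:localizer} without giving a proof, so there is no argument of the author's to compare yours against; I can only assess your proposal on its own terms. Your outline is the standard and essentially correct one: localize near the unit space, dispose of the off-diagonal piece by pseudolocality (Definition~\ref{Def:anisotropicOperator}\,(ii)), transfer the remaining kernel to $U\times W$ through $\Theta_{\psi,\theta}$, and define $q$ as the fiberwise Fourier transform, with uniqueness modulo $S^{-\infty}_{ai}$ following because a difference of symbols with smoothing kernel difference is itself residual. You also correctly identify the genuine technical content, namely that $\alpha=\psi^{-1}\circ\Exp^\nabla\circ\theta$ is only a first-order linearization and one must check that the anisotropic class $S^{m}_{ai,ph}$ is stable under fiberwise diffeomorphisms fixing the zero section with differential the identity; that is where the real work lies and you leave it as a sketch.

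Two remarks. First, your worry that the Volterra property is preserved only ``to leading order'' under $\alpha$ is unnecessarily cautious: since $\G\dtimes\Rr$ is a product with the additive group $(\Rr,+)$ and the connection respects the splitting $\A\oplus\Rr_M$, the exponential map is exactly linear in the $t$-direction, so $\alpha_x(\zeta,t)=(\beta_x(\zeta),t)$ and the support condition $\{t\geq 0\}$ (equivalently, holomorphy in $\tau\in\Cc_-$) is preserved exactly, not approximately. Second, your appeal to Lemma~\ref{Lem:Greiner} to conclude that the Fourier transform of the transferred kernel lies in $S^{m}_{ai}$ is slightly misplaced: that lemma goes in the opposite direction (extending a homogeneous symbol across the origin to a causal tempered distribution). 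The fact you actually need is the kernel--symbol correspondence for anisotropic conormal distributions, which here is effectively built into Definition~\ref{Def:anisotropicOperator}\,(i); you should say so explicitly rather than route it through the lemma. With those adjustments the argument is sound, and it supplies more detail than the paper itself does.
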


Write
\[
\Sigma^{m,2}\left(\A^\ast \oplus \Rr_M; \End(E)\right) := S^{m}_{ai} / S^{m-1}_{ai}.
\]
An element $[q] \in \Sigma^{m,2}$ is represented by a fiberwise parabolically homogeneous, $\tau$-holomorphic function $q_x(y, \xi, \tau)$ of degree $m$, defined for $(\xi, \tau) \neq (0,0)$ and unique modulo $S^{m-1}_{ai}$. The multiplication on $\Sigma^{\bullet, 2}$ is pointwise composition in $\End(E)$: $[q_1] \cdot [q_2] := [q_1 q_2]$, well-defined since $S^{m-1}_{ai}$ is an ideal.
For $Q \in \widehat{\Psi}^{m}_{ai}(\G \dtimes \Rr; \overline r^\ast E)$, choose $q \in S^{m}_{ai,ph}$ with $Q = \Op_{ai}(q) \bmod \Psi^{-\infty}_{\G\dtimes\Rr, ai}$ and set
\[
\sigma_m(Q) := [q_m]\ \in \Sigma^{m,2},
\]
where $q_m$ denotes any leading homogeneous component of $q$; this is chart-independent and well-defined modulo $S^{m-1}_{ai}$. Moreover, $\sigma_{m+m'}(Q_1 Q_2)=\sigma_m(Q_1) \cdot \sigma_{m'}(Q_2)$.

\begin{Def}
A Volterra operator $Q \in \widehat{\Psi}_{ai}^m(\G \dtimes \Rr; \overline{r}^{\ast} E), \ m \in \Zz$ is a continuous operator $Q \colon C_c^{\infty}(\overline{r}^{\ast} E) \to C^{\infty}(\overline{r}^{\ast} E)$ such that
\begin{enumerate}
\item $Q$ has the Volterra property.
\item $Q = \Op_{ai}(q) + R$ for some symbol $q \in S_{ai}^m(\A^{\ast} \oplus \Rr_M, \overline{r}^{\ast} E)$ and some residual operator $R$ that is contained in the Vassout ideal with the Volterra property: $R \in \Psi_{\G \dtimes \Rr, ai}^{-\infty}(\G \dtimes \Rr)$.
\end{enumerate}

\label{Def:VolterraCalc}
\end{Def}

\begin{Rem}
Define the Payley-Wiener algebra as in \cite{LR} by $C_{PW}^{\infty}(\A^{\ast} \oplus \Rr_M) := \Ff C_c^{\infty}(\A \oplus \Rr_M)$. Given a cutoff $\chi \in C_c^{\infty}(\A \oplus \Rr_M)$ with $\chi \equiv 1$ near $(\zeta, t) = (0, 0)$, then $\widehat{\chi} := \Ff \chi \in C_{PW}^{\infty}$. We can rescale the kernel, preserving compact support $\chi_{\epsilon}(\zeta, t) := \epsilon^{-(d +2)} \chi(\epsilon^{-1} \zeta, \epsilon^{-2} t)$, $\chi_{\epsilon} \in C_{PW}^{\infty}$ and $q \mapsto \widehat{\chi}_{\epsilon} q$ furnishes a microlocal cutoff, leaving principal symbols unchanged. We set $C_{PW, +}^{\infty} := \Ff C_c^{\infty}(\A \oplus \Rr_M; t \geq 0)$ which is a Fr\'echet algebra with respect to pointwise composition, i.e. the Volterra property is preserved.
\label{Rem:PWalgebra}
\end{Rem}

\begin{Prop}
\emph{i)} The compactly supported calculus $\Psi_{ai,c}^{m}(\G \dtimes \Rr; \overline{r}^{\ast} E)$ and the extended calculus $\widehat{\Psi}_{ai}^m(\G \times \Rr; \overline{r}^{\ast} E)$ are closed with respect to composition that is given by groupoid convolution of reduced Schwartz kernels.

\emph{ii)} Let $P \in \widehat{\Psi}_{ai}^m(\G \dtimes \Rr; \overline{r}^{\ast} E)$ be such that its Volterra principal symbol is pointwise invertible (i.e. $\sigma_m(P)$ invertible in the symbol space $\Sigma^{m,2}$). Then there is $Q \in \widehat{\Psi}_{ai}^{-m}(\G \dtimes \Rr; \overline{r}^{\ast} E)$ such that $P Q = \Id - S_1, \ Q P = \Id - S_2$ with $S_1, S_2 \in \Psi_{\G \dtimes \Rr, ai}^{-\infty}(\overline{r}^{\ast} E)$. 
\label{Prop:VolterraCalc}
\end{Prop}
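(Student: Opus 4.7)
The plan is to address parts (i) and (ii) separately, reducing everything to the local coordinate picture of Proposition \ref{Prop:localizer} and exploiting the two-sided ideal property of $\Psi_{\G \dtimes \Rr, ai}^{-\infty}$ to absorb residual terms.

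For (i), I would first treat the compactly supported calculus. Given $P_j \in \Psi_{ai,c}^{m_j}$, one localizes via a partition of unity subordinate to groupoid parametrizations $\Omega \cong U \times V$ and, in each chart, applies Proposition \ref{Prop:localizer} to represent the reduced Schwartz kernels as $\Op_{ai}(q_j)$ for symbols $q_j \in S_{ai,ph}^{m_j}$. Groupoid convolution translates, in the generalized exponential coordinates, into the classical asymptotic composition formula
\[
q_1 \# q_2 \sim \sum_{\alpha} \frac{(-i)^{|\alpha|}}{\alpha!} \partial_\xi^\alpha q_1 \cdot \partial_y^\alpha q_2,
\]
each term belonging to $S_{ai,ph}^{m_1 + m_2 - |\alpha|}$ and holomorphic in $\tau \in \Cc_-$ since both factors are. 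An anisotropic Borel summation, carried out with the Paley-Wiener cutoffs $\widehat{\chi}_{\epsilon} \in C_{PW,+}^{\infty}$ of Remark \ref{Rem:PWalgebra} (which preserve $\tau$-holomorphy and Volterra support), produces a symbol $q \in S_{ai,ph}^{m_1 + m_2}$ whose quantization represents $P_1 P_2$ modulo the Vassout--Volterra ideal. Proper support of the $P_j$ ensures the global convolution is well defined. For the extended calculus $\widehat{\Psi}_{ai}^m$, the cross-terms involving a residual factor are absorbed because $\Psi_{\G \dtimes \Rr, ai}^{-\infty}$ is a two-sided ideal in $M(C_r^{\ast}(\G \dtimes \Rr))$ into which $\Psi_{ai,c}^{\ast}$ acts via the right-regular representation, and convolutions of kernels supported in $\{t \geq 0\}$ remain Volterra.

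For (ii), I would run the standard iterative parametrix construction inside the Volterra symbol class. Since $[\sigma_m(P)] \in \Sigma^{m,2}$ is pointwise invertible with $\tau$-holomorphic inverse on $\Cc_-$, setting $Q_0 := \Op_{ai}([\sigma_m(P)]^{-1}) \in \widehat{\Psi}_{ai}^{-m}$ and applying (i) yields $P Q_0 = \Id - R_1$ with $R_1 \in \widehat{\Psi}_{ai}^{-1}$. The partial sums $Q_N := Q_0 \bigl(\sum_{k=0}^{N} R_1^k\bigr)$ satisfy $P Q_N = \Id - R_1^{N+1}$ with $R_1^{N+1} \in \widehat{\Psi}_{ai}^{-(N+1)}$, and an anisotropic Borel summation (again preserving $\tau$-holomorphy via the cutoffs of Remark \ref{Rem:PWalgebra}) produces a right parametrix $Q \in \widehat{\Psi}_{ai}^{-m}$ with $P Q - \Id \in \bigcap_{N \geq 0} \widehat{\Psi}_{ai}^{-N}$. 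A symmetric construction furnishes a left parametrix $Q'$, and associativity together with (i) forces $Q \equiv Q' \bmod \Psi_{\G \dtimes \Rr, ai}^{-\infty}$, so a single $Q$ serves as a two-sided parametrix.

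The main obstacle is identifying $\bigcap_{N \geq 0} \widehat{\Psi}_{ai}^{-N}$ with $\Psi_{\G \dtimes \Rr, ai}^{-\infty}$ while preserving the Volterra support condition. A Volterra residual symbol in $S_{ai}^{-\infty}$, being $\tau$-holomorphic on $\Cc_-$ of arbitrarily large negative parabolic order, produces via $\Op_{ai}$ a Schwartz kernel supported in $\{t \geq 0\}$ that decays rapidly in every Sobolev module norm $\|\cdot\|_{t \to t-m}$ of \eqref{SovolevNorm}. Combined with the inclusion \eqref{LongContInclusion} and the characterization of the Vassout ideal as $\E^{\infty} \cap (\E^{\infty})^{\ast}$ via the scale $\{H^t\}_{t \in \Rr}$, this places the residual inside $\Psi_{\G \dtimes \Rr, ai}^{-\infty}$, and the Volterra property of $S_1, S_2$ follows from the stability of $\{t \geq 0\}$ support under groupoid convolution.
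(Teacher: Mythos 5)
Part (i) of your proposal is essentially the paper's argument: localize via Proposition \ref{Prop:localizer}, compose symbols with the anisotropic $\#$-product, and observe that convolution of kernels supported in $\{t \geq 0\}$ stays supported there. That part is fine.

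Part (ii) has a genuine gap. You write $Q_0 := \Op_{ai}([\sigma_m(P)]^{-1})$ and later appeal to ``Borel summation preserving $\tau$-holomorphy via the cutoffs of Remark \ref{Rem:PWalgebra}.'' But each homogeneous component of the formal inverse --- starting with $p_m^{-1}$, which is parabolically homogeneous of degree $-m$ --- is defined only on $(\xi,\tau)\neq(0,0)$ and is in general not locally integrable at the origin, so it is not yet a quantizable symbol. The standard remedy, excising a neighbourhood of the origin with a cutoff in $(\xi,\tau)$, destroys holomorphy in $\tau\in\Cc_-$ and hence the Volterra support of the kernel; this is exactly the failure mode the paper flags (``the cutoffs used to globalize in $(\xi,\tau)$ need not preserve $\tau$-holomorphy''). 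The Paley--Wiener cutoffs $\widehat{\chi}_{\epsilon}$ of Remark \ref{Rem:PWalgebra} do not fix this: $\widehat{\chi}_{\epsilon}(0,0)=\int\chi\neq 0$, so multiplying by $\widehat{\chi}_{\epsilon}$ does nothing to regularize the singularity at the origin --- those cutoffs only localize the kernel near the diagonal once one already has a bona fide tempered distribution. The missing ingredient is the unique Volterra extension of Lemma \ref{Lem:Greiner}: each homogeneous piece $q^{(0)}_{-m-j}$ must be extended across $(\xi,\tau)=(0,0)$ to a parabolically homogeneous tempered distribution $g_{-m-j}$ whose inverse Fourier transform is supported in $\{t\geq 0\}$, and only then can one Borel-sum (as in Ponge--Mikayelyan) to a genuine $\tau$-holomorphic symbol $q\sim\sum_j g_{-m-j}$ and quantize. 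Without invoking that extension lemma, your construction either produces a non-Volterra parametrix or is not well defined; the Neumann-series packaging $Q_N=Q_0\sum_k R_1^k$ versus the paper's inductive symbol recursion is an inessential difference, but the extension step is the heart of the matter and cannot be skipped.
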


\begin{proof}
\emph{i)} Working in a chart provided by the localizer $\Theta_{\psi,\theta}$ by virtue of Prop. \ref{Prop:localizer}, let $Q_j=\Op_{ai}(q_j)$ with $q_j\in S^{m_j}_{ai,ph}(U \times \Rr^{d+1};\End(E_x))$ ($j=1,2$). Their reduced kernels are
\[
k_{q_j}(u; \zeta, t) = (2\pi)^{-(d+1)} \int e^{-i \scal{\zeta}{\xi} + t \tau)} q_j(u, \xi, \tau) \,d\xi\,d\tau,
\]
which vanish for $t<0$ by holomorphy in $\tau \in \Cc_-$. The composition kernel on $U$ is the
convolution $k_{q_1} \ast k_{q_2} = k_{q_1 \# q_2}$ is still supported in $\{t \geq 0\}$.
A standard computation shows that $q_1 \# q_2\in S^{m_1+m_2}_{ai,ph}$ with
\[
q_1 \# q_2\ \sim \sum_{\alpha} \frac{1}{\alpha!} \left(\partial_\xi^{\alpha} q_1 \right) \left(D_{\xi}^\alpha q_2\right),
\]
so $\Theta_{\psi,\theta}(Q_1 Q_2)  =\Op_{ai}(q_1\# q_2)$ modulo $\Psi^{-\infty}_{ai}$,
and the leading term gives $\sigma_{m_1+m_2}(Q_1 Q_2)=\sigma_{m_1}(Q_1) \cdot \sigma_{m_2}(Q_2)$.
\emph{ii)} Let $p := \sigma_m(P) \in \Sigma^{m,2}$ be pointwise invertible; write $p^{-1}$ for its
fibrewise inverse in $\Sigma^{-m,2}$. In a localizer chart, choose a representative
$p_m(u, \xi, \tau)$ of $p$ and define the first approximation $q_{-m}^{(0)}(u, \xi, \tau) := p_m(u,\xi, \tau)^{-1}\in S^{-m}_{ai}$. Define inductively $q_{-m-j}^{(0)} \in S^{-m-j}_{ai}$ so that, for each $N$,
\[
p\# \left(\sum_{j < N} q^{(0)}_{-m-j} \right) = 1 \ \bmod \ S^{-m-N}_{ai},
\]
with the anisotropic $\#$-product from \emph{(i)}.
This yields a formal inverse $q^{(0)} \sim \sum_{j \geq 0} q^{(0)}_{-m-j} \in S^{-m}_{ai,ph}$,
but the cutoffs used to globalize in $(\xi, \tau)$ need not preserve $\tau$-holomorphy,
so $q^{(0)}$ may fail to be Volterra. To make the kernel compactly supported and keep causality, pick $\chi \in C_c^\infty(\A \oplus \Rr_M)$ with $\chi \equiv 1$ near $(\zeta, t) = (0, 0)$ and $\supp \chi \subset \{t \geq 0\}$, and set $\widehat{\chi} := \F_{fib} \chi \in C_{PW,+}^\infty$. Replace $q$ by $\widehat{\chi} q$ and, for each homogeneous piece $q^{(0)}_{-m-j}$, apply the fibrewise Volterra extension given by Lem. \ref{Lem:Greiner} to get a unique homogeneous distribution $g_{-m-j}$ of degree $-m-j$ whose $(\xi, \tau)$-inverse Fourier transform is supported in $\{t \geq 0\}$, and which agrees with $q^{(0)}_{-m-j}$ off $(\xi, \tau) = (0,0)$. By a standard Borel summation as in \cite[p. 5-6]{PongeMika}, there exists
\[
q \in S^{-m}_{ai,ph} \ \text{such that} \
q \sim \sum_{j \geq 0} g_{-m-j}
\]
and $q$ is holomorphic in $\tau \in \Cc_-$; hence $\Op_{ai}(q)$ is Volterra.
 Define $Q:=\Op_{ai}(\widehat{\chi} q)$. By construction, $P Q = \Id - R_1$ and $Q P = \Id - R_2$ modulo $S^{-\infty}_{ai}$ in each chart.
\end{proof}

\section{Short time asymptotic expansion}


\subsection{Parabolic adiabatic deformation groupoid}

In order for us to specify the parabolic adiabatic deformation groupoid, we recall some of the necessary notions that enter into the filtered calculus as described in \cite{vEY}; see also \cite{DH, SLLS}. 
We fix the filtration $H^{\bullet}$ of maximal degree $N = 2$ on the Lie algebroid $\A \oplus \Rr_M \cong \A(\G \dtimes \Rr)$ of the groupoid $\G \dtimes \Rr$ that is given by $H^0 = \{0\}, \ H^1 = \A \oplus 0, \ H^2 = \A \oplus \Rr$. Fix the notation
\[
t^p H := H^p / H^{p-1}, \ \A(\G \dtimes \Rr)_H := \oplus_{i=0}^N H^i / H^{i-1} = \oplus_{i=0}^N t^i H. 
\]

Then $\A_{H, x} = \oplus_p t_x^p H$ has a canonical Lie algebra structure depending smoothly on the base point $x$. Let $\nabla$ denote a linear connection on $\A \oplus \Rr_M$ preserving the grading automorphism 
\[
\delta_{\lambda} \colon \A(\G \dtimes \Rr)_H \to \A(\G \dtimes \Rr)_H, \ \delta_{\lambda} = 
\oplus_{i=1}^{N} \lambda^{w_i}, \ w_1 = 1, \ w_2 = 2. 
\]
The $\delta_{\lambda,x}$ result from integration of the corresponding scaling automorphisms $\dot{\delta}_{\lambda, x}$ via the fiberwise exponential map 
\[
\exp^{\nabla} \colon \A(\G \dtimes \Rr)_H \to (\G \dtimes \Rr)_{H}; 
\]
the right hand side is a bundle of Lie groups and $\exp^{\nabla}$ implements a diffeomorphism of locally trivial bundles over $M$. Thereby, $\dot{\delta}_{\lambda, x} \in \Aut(t_x H)$ integrates to $\delta_{\lambda, x} \in \Aut((\G \dtimes \Rr)_{H, x})$. In particular 
\[
\delta_{\lambda} \colon (\G \dtimes \Rr) \to (\G \dtimes \Rr) 
\]
are bundle diffeomorphisms such that 
\[
\delta_{\lambda} \circ \exp = \exp \circ \dot{\delta}_{\lambda}. 
\]
Fix the splitting $\Psi \colon \A(\G \dtimes \Rr)_H \to \A \oplus \Rr_M$ corresponding to $\nabla$. This allows us to identify $\A(\G \dtimes \Rr)_H \cong \A \oplus \Rr_M$ via $-\Psi$ and with respect to the left-invariant vector fields on the bundles of Lie groups. Also, $(\G \dtimes \Rr)_{H, x} \cong \A_{H, x}$ via $\exp$ and with respect to the right-invariant vector fields on the Lie algebroid. We want to define the corresponding deformation Lie groupoid that is given as a set by the expression \eqref{DeformationGroupoid}. This groupoid is obtained by integration of the appropriate Lie algebroid of the deformation groupoid that is given by the set and the Lie algebra of vector fields:
\begin{align*}
& \U_{H^{\bullet}} := (\A \oplus \Rr_M) \times \Rr_{\hbar}^{\ast} \cup \A(\G \dtimes \Rr)_H \times \{0\}_{\hbar}, \\
& \Gamma(\U_{H^{\bullet}}) := \{X \in \Gamma((\A \oplus \Rr_M) \times \Rr_{\hbar}) : \partial_{\hbar}^i X \in \Gamma(H^i), \ i \geq 0\}. 
\end{align*}

We have the diagram
\[
\begin{tikzcd}[row sep=huge, column sep=huge, text height=2ex, text depth=0.5ex]
(\G \dtimes \Rr)_H \arrow{d} & \arrow[l, "\exp^{\nabla}"] \A(\G \dtimes \Rr)_H \arrow{d} \arrow[r, "-\Psi", "\simeq"'] & \A \oplus \Rr_M \arrow{d} \arrow{r} & M \times M \arrow[d, "\mathrm{pr}_1"] \\
M \ar[equal]{r} & M \ar[equal]{r} & M \ar[equal]{r} & M \arrow[bend right=60, swap]{u}{\Delta}
\end{tikzcd}
\]

Set $\nabla^{\Psi} := \Psi \circ \nabla \circ \Psi^{-1}$ for the induced connection on the filtered Lie algebroid $\A \oplus \Rr_M$ and 
\[
\Exp^{\nabla^{\Psi}} \colon \A \oplus \Rr_M \to \G \dtimes \Rr
\] 
the generalized exponential map. We have $\Gamma((\G \dtimes \Rr)^{pa}) \cong \Gamma(\A(\G \dtimes \Rr)_H \times \Rr_{\hbar})$, via $\widetilde{X} \mapsto ((x, \hbar) \mapsto \delta_{\hbar} \Psi^{-1} \widetilde{X}(x, \hbar))$ and the identification Lie algebra isomorphism 
\[
\Gamma(\A((\G \dtimes \Rr)^{pa})) = \{\widetilde{X} \in \Gamma(\A(\G \dtimes \Rr) \times \Rr_{\hbar}) : \partial_{\hbar|\hbar = 0}^k \widetilde{X} \in \Gamma(t^k H), \ k \in \Nn_0\}.
\]

The left hand side bracket is the bracked of $\A(\G \dtimes \Rr) \times \Rr_{\hbar}$ and the anchor is given by the anchor of $\A(\G \dtimes \Rr) \times \Rr_{\hbar}$. As in the definition of the standard (non-filtered) tangent groupoid, we indicate the smooth structure by charts of the form 
\[
(x, \xi, \tau, \hbar) \mapsto \begin{cases} \Exp_x^{\nabla^{\Psi}}(\Psi(\delta_{\hbar}(\xi, \tau), \hbar), \ \hbar \not= 0 \\
(x, \xi, \tau, 0), \ \hbar = 0 \end{cases}
\]

in a small tubular neighborhood of the zero section $O_M \subset \U \subset \A \oplus \Rr_M$. Denote by $(\G \dtimes \Rr)_H^{op}$ the groupoid with the opposite categorical structure. We have the evaluations
\[
\Gamma(\A((\G \dtimes \Rr)_H^{op})) \xleftarrow{\mathrm{ev}_0} \Gamma(\A((\G \dtimes \Rr)^{pa})) \xrightarrow{\mathrm{ev}_t} \A(\G \dtimes \Rr),
\]

and $\mathrm{ev}_0(\widetilde{X}) = \lim_{\hbar \to 0} \delta_{\hbar} \mathrm{ev}_{\hbar}(\widetilde{X})$. We have $\Gamma(\A(\G \dtimes \Rr)_H) = \Gamma(\A((\G \dtimes \Rr)_H^{op}))$ with respect to the left invariant vector fields of the Lie algebra on the right hand side, and the inclusions $(\G \dtimes \Rr)_{H, x} \hookrightarrow (\G \dtimes \Rr)_H$ are strict groupoid morphisms, $x \in M$. 

\subsection{Standard coordinates and lift}

We fix the notion of standard exponential coordinates on $(\G \dtimes \Rr)^{pa}$ that are obtained via a localizer associated to the generalized exponential map as described in the previous section. These are given by the $4$-tuple $(U, V, \varrho^{pa}, \U)$ where $U \times V \times \Rr_{\hbar} \supset \U \supset U \times \{0\} \times \{0\}_{\hbar}$ and $\U$ is invariant under the scaling action
\[
\alpha_{\lambda}(u, z, \hbar) = (u, \delta_{\lambda} z, \lambda^{-1} \hbar), \ (u, z, \hbar) \in \U, \ \lambda > 0. 
\]

The map $\varrho$ is the specialization of the Lie algebroid anchor of $\A \oplus \Rr_M$ with respect to the algebroid parametrization; see also \cite{McDonald}. 


Let $\{\mu_x\}_{x \in M}$ be a Haar system for $\G$, and endow $\G \dtimes \Rr$ with the product Haar system $\{\mu_x \otimes dt\}_{x \in M}$. On $(\G \dtimes \Rr)^{pa}$ we use the following specialization:
\begin{itemize}
\item For $\hbar \neq 0$, in the chart $U \times V$ with coordinates $(x, z)$, we pull back the Haar density by
\begin{align*}
& \Exp_x^{\nabla^{\Psi}} \circ \delta_{\hbar} \colon V \to (\G \dtimes \Rr)_{x}, \\
& \delta_{\hbar}(z) := (\hbar \zeta, \hbar^2 t), \ \text{if} \ z = (\zeta, t),
\end{align*}

and write
\begin{align*}
& d\mu_{x, \hbar}(z) = J_x(z, \hbar)\,dz, \
J_x(z, \hbar) := \left|\det D \left(\Exp_x^{\nabla{\Psi}} \circ \delta_{\hbar}\right) \right|.
\end{align*}

\item For $\hbar = 0$, on the osculating group $(\G \dtimes \Rr)_{H, x} \cong \A_x \oplus \Rr$ we take the left Haar measure given by the product of a fixed smooth density on $\A_x$ and Lebegue measure $dt$, so in the $(\zeta, t)$-coordinates
\[
d\mu_{x, 0}(\zeta, t) = d\zeta \, dt.
\]
\end{itemize}

These choices are compatible with parabolic scaling:
\[
(\delta_{\lambda})^{\ast}(d \mu_{x, \hbar}) = \lambda^{d+2} \,d\mu_{x, \lambda \hbar}, \ \lambda > 0
\]

and glue to a smooth Haar system $\{\mu_{x, \hbar}\}_{(x, \hbar) \in M \times \Rr_{\hbar}}$ on $(\G \dtimes \Rr)^{pa}$. 

Let $q \in S_{ai, ph}^m(\A^{\ast} \oplus \Rr_M; \End(E))$ be a Volterra symbol for $T$ in the local chart. Define the \emph{$\hbar$-rescaled symbol}
\begin{align}
q_{\hbar}(u, \xi, \tau) := q\left(u, \hbar \xi, \hbar^2 \tau\right), \ \hbar \geq 0, \label{paSymbol}
\end{align}

where for $\hbar = 0$ the right-hand side is understood as the (homogeneous) principal limit on $(\xi, \tau) \not= (0, 0)$ (i.e. the Volterra principal symbol in degree $m$ on the osculating fibers). Then the quantization on $(\G \dtimes \Rr)^{pa}$ is given, in standard coordinates, by the fiberwise oscillatory integral
\begin{align}
k_q^{pa}(u, \zeta, t, \hbar) &:= (2 \pi)^{-(d+1)} \int_{\Rr^d} \int_{\Rr} e^{-i \scal{\zeta}{\xi} + i t \tau} q_{\hbar}(u, \xi, \tau) \,d\xi\,d\tau \label{paQuant}
\end{align}

which defines a family of reduced kernels supported in $\{t \geq 0\}$. Equivalently, on the kernel side this is just the parabolic rescaling
\begin{align}
& k_q^{pa}(u, \zeta, t, \hbar) = \hbar^{-d-2} k_q(u, \zeta \hbar, t \hbar^2), \ \hbar > 0 \notag \\
& k_q^{pa}(u, \zeta, t, 0) = k_{q_m}(u, \zeta, t), \label{paRescale}
\end{align}

where $k_q$ is the reduced kernel of $T = \Op_{ai}(q)$ and $q_m$ is the leading homogeneous component of $q$. 

We continue to work in the standard coordinates, i.e. with respect to a groupoid parametrization of $(\G \dtimes \Rr)^{pa}$ that is refined via the generalized exponential map $\Exp^{\nabla^{\Psi}}$ with respect to a splitting $\Psi$. 
In these standard coordinates, we study compactly supported continuous families $f \in C(U \times \Rr_{\hbar}, C_c^{\infty}(V))$. The goal is to extend a given Volterra operator $T$ to a $1$-parameter family on the parabolic adiabatic deformation grouopoid $(\G \dtimes \Rr)^{pa}$. The family $\{T_{\hbar}\}_{\hbar > 0}$ acts, for $s$-fibered (longitudinally smooth, transversely continuous) distributional kernels $k \in \mathscr{E}_{s}^{', 0}$ via
\begin{align}
(T_{\hbar} g)(u) &= \int_V k_q^{pa}(u, z, \hbar) g(\exp(\varrho^{pa} \delta_{\hbar} z) u) \,dz \notag \\
&= \hbar^{-d-2} \int_V k_q^{pa}(u, \delta_{\hbar}^{-1} z, \hbar) g(\exp(\varrho^{pa} z) u)\,dz, \label{paVolterra}
\end{align}

and for $\hbar = 0$ it is the model operator on the osculating bundle
\[
(T_0 g)(u) = \int_{\A_x \times \Rr} k_{q}^{pa}(u, \zeta, t, 0) g\left(\exp(\varrho^{pa}(\zeta, t))u \right)\, d\zeta\,dt.
\]
Recall that by an application of the scaling action (cf. \cite{DS}) where we write $k = k^{pa}$
\[
((\alpha_{\lambda})_{\ast} k, \varphi)(u, \hbar) = \lambda^{d+2}(k, \varphi \circ \alpha_{\lambda})(u, \lambda \hbar), \ (u, \hbar) \in r(\U),
\]

where the $s$-fiberwise action of $k$ is given by
\[
(k, g)(u, \hbar) = \int_V k(u, z, \hbar) g(u, z, \hbar) \,dz, \ g \in C^{\infty, 0}(\U). 
\]

The extended Volterra operator family kernel $k$ then fulfills the homogeneity property
\begin{align}
& \forall_{\lambda > 0} \ \lambda^{-m-d-2} (\alpha_{\lambda})_{\ast} k - k \in C^{\infty, 0}(\U). \label{Homogeneity}
\end{align}

We make use of these expressions in standard coordinates to study the asymptotic expansion of the heat kernel on a Lie groupoid. 

\begin{Prop}
Let $A \in \Diff^m(\G; r^{\ast} E)$ be a $\G$-differential operator acting on sections of a vector bundle $\pi \colon E \to M$ on the $d$-dimensional compact manifold (possibly with corners) $M$ of order $m > 0$ which is self-adjoint and elliptic, acting as an unbounded operator on the $C_r^{\ast}(\G)$-module $\E := C_r^{\ast}(\G, r^{\ast} E)$ and non-negative with respect to the canonical $L^2 = H^0$ inner product. Then the semigroup heat kernel $k_t$ of $\exp(-tA)$ has a short time asymptotic expansion when restricted to the diagonal $\Delta_M$ in $\G$ of the form
\[
k_{t|\Delta} \sim \sum_{j=0}^{\infty} t^{\frac{j-d}{m}} q_j, \ \text{as} \ t \to 0^{+}. 
\]

Here the coefficients are sections $q_j\in \Gamma\left(\End(E)\otimes |\!\wedge|\A^\ast\right)$ that depend continuously on $x \in M$. 
\label{Prop:AsymptExp}
\end{Prop}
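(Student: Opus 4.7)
The plan is to exhibit $\exp(-tA)$ as the unique Volterra fundamental solution of the heat operator $\partial_t + A$ on $\G \dtimes \Rr$, obtain its Schwartz kernel as a polyhomogeneous Volterra parametrix modulo a smoothing residual, and then read off the coefficients $q_j$ from the homogeneous components of the associated symbol via the parabolic rescaling on the adiabatic deformation groupoid. I work in the $m$-anisotropic variant of the calculus set up in Sect.~3, where $\partial_t$ carries weight $m$ and the dilations are $\delta_\lambda(\xi, \tau) = (\lambda \xi, \lambda^m \tau)$; the development from Def.~\ref{Def:anisotropicSymbol} through Prop.~\ref{Prop:VolterraCalc} transfers verbatim with this adjustment.

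First I will verify that $\partial_t + A \in \widehat{\Psi}_{ai}^m(\G \dtimes \Rr; \overline{r}^{\ast} E)$ has pointwise invertible Volterra principal symbol. The symbol is $\sigma_m(\partial_t + A)(x, \xi, \tau) = \sigma_m(A)(x, \xi) - i\tau$; since $A$ is $\G$-elliptic and non-negative, $\sigma_m(A)(x, \xi) \geq 0$ vanishes only at $\xi = 0$, so for $\tau \in \Cc_-$ the combination $\sigma_m(A)(x, \xi) - i\tau$ stays away from $0$ whenever $(\xi, \tau) \neq (0, 0)$. Prop.~\ref{Prop:VolterraCalc}(ii) then produces a Volterra parametrix $Q \in \widehat{\Psi}_{ai}^{-m}(\G \dtimes \Rr; \overline{r}^{\ast} E)$ with $(\partial_t + A) Q = \Id - R_1$ and $Q(\partial_t + A) = \Id - R_2$, where each $R_j \in \Psi_{\G \dtimes \Rr, ai}^{-\infty}$ enjoys the causal support property. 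Because $R_1$ is smoothing and supported in $\{t \geq 0\}$, the iterated convolutions $R_1^{\ast k}$ vanish to order $k$ at $t = 0$, so the Volterra Neumann series $\sum_k R_1^{\ast k}$ converges in the $s$-fibered smooth topology on any $[0, T]$ and upgrades $Q$ to a true Volterra inverse $\widetilde{Q}$. By uniqueness in Prop.~\ref{Prop:HeatSemigroup}, the kernel of $\widetilde{Q}$ agrees with that of $\exp(-tA)$ for $t > 0$, and $\widetilde{Q} - Q$ contributes only a smoothing Volterra tail whose restriction to $\Delta_M$ is $O(t^{\infty})$ as $t \downarrow 0^+$.

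The expansion is then extracted from the polyhomogeneous decomposition $q \sim \sum_{j \geq 0} q_{-m-j}$ of the parametrix symbol by combining the parabolic rescaling \eqref{paRescale} with the homogeneity relation \eqref{Homogeneity} on $(\G \dtimes \Rr)^{pa}$. For each component $q_{-m-j}$, Lem.~\ref{Lem:Greiner} produces a Volterra distribution $k_{-m-j}(u, \zeta, t)$ supported in $\{t \geq 0\}$ that is parabolically homogeneous in the sense
\[
k_{-m-j}(u, \lambda^{-1} \zeta, \lambda^{-m} t) = \lambda^{d - j} k_{-m-j}(u, \zeta, t), \ \lambda > 0.
\]
Setting $\zeta = 0$ and $\lambda = t^{-1/m}$ gives $k_{-m-j}(u, 0, t) = t^{(j-d)/m} q_j(u)$ with $q_j(u) := k_{-m-j}(u, 0, 1) \in \Gamma(\End(E) \otimes |\!\wedge|\A^{\ast})$. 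Truncating the expansion at order $N$ leaves a Volterra symbol of degree $-m - N$ whose kernel restricts on $\Delta_M$ to a continuous remainder of size $O(t^{(N-d)/m})$, producing the claimed asymptotic $k_{t|\Delta} \sim \sum_{j \geq 0} t^{(j-d)/m} q_j$.

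The main obstacle I anticipate is globalizing the chartwise parametrix construction of Prop.~\ref{Prop:localizer} to a true Volterra fundamental solution with controlled transverse regularity. Because the Vassout residuals live only in the class $C^{\infty, 0}$ of kernels smooth along orbits but merely continuous transversely, and because, as noted in the Introduction, indicial resonances on singular $M$ can in principle generate $\log t$ corrections, one must verify that the Neumann iteration preserves the polyhomogeneous expansion of $Q$ on $\Delta_M$ without such corrections. Ruling these out relies on $A$ being a genuine $\G$-differential operator, so that the Greiner--Ponge recursion on each osculating fiber is uniformly polyhomogeneous in $x$, together with the fact that the causal convolution tails remain compatible with the $C^{\infty, 0}$ control upon restriction to the diagonal.
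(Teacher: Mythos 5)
Your overall strategy coincides with the paper's: realize the heat semigroup as the Volterra fundamental solution of $\partial_t+A$ inside $\widehat{\Psi}^{-m}_{ai,ph}$, and read the diagonal expansion off the parabolic homogeneity of the kernel components. The organization differs in two respects. First, you build the inverse from the parametrix of Prop.~\ref{Prop:VolterraCalc}(ii) by a Volterra Neumann series and then invoke uniqueness from Prop.~\ref{Prop:HeatSemigroup}; the paper goes the other way, constructing $Q$ directly from the semigroup via $(Q\psi)(s)=\int_{-\infty}^{0}e^{-(s+t)A}\psi(t)\,dt$, placing it in $C^{\infty,0}$ by contractivity, and comparing with the calculus element $\widetilde Q$ through $(\Id+R_1)Q=\widetilde Q=Q(\Id+R_2)$. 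Both are legitimate; yours requires checking that the Volterra iteration converges in the $s$-fibered $C^{\infty,0}$ topology, which you assert but which is the same kind of transverse-regularity issue the paper sidesteps by starting from the semigroup. Second, and more substantively, the paper extracts the coefficients by lifting $\widetilde Q$ to the parabolic adiabatic groupoid, Taylor-expanding the localized kernel at $\hbar=0$, and using the Dave--Haller lemma, which a priori only gives homogeneity \emph{modulo logarithmic terms} $\lambda^{-m-d-2-j}\log|\lambda|\,p_j$; the heart of the paper's proof is then an explicit Paley--Wiener/holomorphy argument showing $\widehat p_j$ is supported at the origin while $\widehat q_j$ is a boundary value from $\Cc_-$, forcing $p_j=0$. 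You instead appeal to Lem.~\ref{Lem:Greiner} to obtain strictly homogeneous Volterra extensions of each $q_{-m-j}$ and relegate the log issue to your final paragraph as something ``one must verify.'' Since the possible appearance of $\log t$ terms is precisely the point at which the Volterra (causality) property does real work, this is the one step where your argument is materially thinner than the paper's: the correct justification is not that $A$ is a differential operator, but the holomorphy in $\tau\in\Cc_-$ of the symbol components, which is what both Lem.~\ref{Lem:Greiner} and the paper's Paley--Wiener computation exploit. Your homogeneity bookkeeping (degree $j-d$ for $k_{-m-j}$, hence $t^{(j-d)/m}$ on the diagonal at $\zeta=0$) is consistent with the statement and in fact cleaner than the paper's, which mixes the weight-$2$ and weight-$m$ conventions in its final lines.
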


\begin{proof}
Denote by $C_+(\Rr, H^0)$ the set of continuous functions $\psi \colon \Rr \to H^0(r^{\ast} E)$ such that there is a $t_0 > 0$ with $\psi(t) = 0$ for $t \leq t_0$, cf. \cite{Greiner}. Define $Q \colon C_+(\Rr, H^0) \to C_+(\Rr, H^0)$ via 
\[
(Q \psi)(s) := \int_{-\infty}^0 e^{-(s+t) A} \psi(t)\,dt, 
\]

where we utilize the earlier defined heat semigroup. By the strong continuity of the semigroup, Prop. \ref{Prop:HeatSemigroup}, we have $Q \psi \in C_+(\Rr, H^0)$. The $H^0$-contractivity of the semigroup furnishes
\[
\|(Q \psi)(s)\| \leq |s - t_0| \sup_{[t_0, s]} \|\psi(t)\|, \ \forall \ \psi \in C_+ 
\]

such that $\supp \psi \subset [t_0, \infty)$. We have
\[
C_+(\Rr, H^0) = \varprojlim_{t_0} C_{[t_0, \infty)}(\Rr, H^0)
\]

with the projective limit topology of uniform convergence on compact subsets over $t_0 \in \Rr$. We have the continuous inclusions $C_c^{-\infty, 0} \hookrightarrow C_+ \hookrightarrow C_c^{\infty, 0}$ and hence $Q$ furnishes a $C^{\infty, 0}$ Schwartz kernel $k_Q$ that is contined in $\Gamma^{0, \infty}(r^{\ast} E \otimes s^{\ast} E^{\ast})$. Let the heat operator corresponding to $A + \partial_t$ be denoted by $\widetilde{Q}$, i.e. the natural realization of the fundamental solution, with fixed initial condition, inside the extended Volterra calculus $\widehat{\Psi}_{ai, ph}^{-m}$. Denote by $\widetilde{Q}_{\hbar}$ its lift to a $1$-parameter family on the parabolic adiabatic groupoid. Denote by $\mathcal{P}^s$ the set of $C^{\infty}$ families of polynomial volume densities of homogeneous degrees on the fibers of $(\G \dtimes \Rr)_{H, x} \cong \A(\G \dtimes \Rr)_{H, x}$ defined by 
\[
\mathcal{P}^s((\G \dtimes \Rr)_H) := \{k \in \Psi_{ai}^{-\infty}((\G \dtimes \Rr)_H; \widetilde{r}^{\ast} E)|(\delta_{\lambda})_{\ast} k = \lambda^s k, \ \forall \ \lambda > 0\}. 
\]

Express the lifted family $\widetilde{Q}_{\hbar}$ in fixed standard coordinates $\U$. Then the task is to study the asymptotic behavior of the localizer $\Theta^{\ast}(k_{\widetilde{Q}_{\hbar}|\U})$ at $\hbar = 0$ as $t \to 0^{+}$. 
%
%
We have
\[
\Theta^{\ast}(k_{\widetilde{Q}_{\hbar}|\U}) - \sum_{j=0}^{J_N} q_j \in C^N
\]

and by \cite[Lem. 2]{DH}
\begin{align}
& (\delta_{\lambda}^{(\G \dtimes \Rr)_H})_{\ast} q_j = \lambda^{-m-d-2-j} q_j - \lambda^{-m-d-2-j} \log |\lambda| p_j, \label{Homogeneity}
\end{align}

where $p_j$ is $C^{\infty}$ and strictly homogeneous, i.e. 
\[
p_j \in \mathcal{P}^{-m-j-d}((\G \dtimes \Rr)_H; \End(E))
\]

such that
\[
(\delta_{\lambda}^{(\G \dtimes \Rr)_H)})_{\ast} p_j = \lambda^{-m-d-j} p_j, \ \lambda \not= 0.
\]

Here the representatives $q_j$ satisfying \eqref{Homogeneity} are unique mod $\mathcal{P}^{-m-d-j}((\G \dtimes \Rr)_H; \End(E))$. We have
\[
(\Id + R_1) Q \psi = \widetilde{Q} \psi = Q (\Id + R_2) \psi
\]

where the $R_j$'s are contained in the Vassout ideal $\Psi_{\G \dtimes \Rr, ai}^{-\infty}$. By virtue of this identity we obtain
\[
\Theta^{\ast}(k_{Q_{\hbar}|\U}) \sim \sum_{j=0}^{\infty} q_j.
\]

By virtue of Taylor's theorem, by adding terms in $\mathcal{P}^{-m-d-j}$ to $q_j$, we have that for every $N$ there exists $j_N$ such that
\[
\left(\Theta^{\ast}(k_{Q_{\hbar = 0|\U}}) - \sum_{j=0}^{j_N} q_j\right)(\widetilde{\zeta}) = O(|\widetilde{\zeta}|^N)
\]

as $\widetilde{\zeta} \to \widetilde{o}_x \in (\G \dtimes \Rr)_{H, x}$ uniformly for $x$ in compact subsets of $M$. 
Take the fiberwise Fourier transform in $(\zeta, t)\mapsto (\xi, \tau)$:
\[
\widehat{q}_j(\xi, \tau):=\F_{(\zeta, t)\to (\xi, \tau)}[q_j],
\widehat{p}_j:=\F[p_j].
\]
Because $\supp q_j \subset \{t \geq 0\}$, the Paley-Wiener theorem implies that for each
$\xi \in \Rr^d$, the map $\tau \mapsto \widehat{q}_j(\xi,\tau)$ is the boundary value (from
$\Im\tau < 0$) of a holomorphic function with at most polynomial growth. Since $p_j$ is a
polynomial density in $(\zeta, t)$, $\widehat{p}_j$ is a finite linear combination of
derivatives of the Dirac mass supported at $(\xi, \tau) = (0, 0)$.

Fourier transform intertwines dilation pushforwards by
\[
\widehat{(\delta_\lambda)_{\ast} u}(\xi, \tau) = \lambda^{-(d+2)} \widehat{u}(\lambda^{-1} \xi,\lambda^{-m} \tau),
\]
hence \eqref{Homogeneity} becomes
\begin{equation}\label{FT-homog}
\widehat{q}_j(\lambda^{-1} \xi, \lambda^{-m} \tau) = \lambda^{-(m+j)} \widehat{q}_j(\xi, \tau)
- \lambda^{-(m+j)} \log(\lambda) \widehat{p}_j(\xi, \tau),\ \lambda>0,
\end{equation}
as an identity of tempered distributions on $\Rr_{\xi}^d \times \Rr_{\tau}$.
Since $\widehat{p}_j$ is supported at $(\xi, \tau) = (0, 0)$, \eqref{FT-homog} reduces on
$(\Rr^d \times \Rr) \setminus \{(0,0)\}$ to
\[
\widehat{q}_j(\lambda^{-1}\xi, \lambda^{-m}\tau) = \lambda^{-(m+j)} \widehat{q}_j(\xi, \tau),\ \lambda > 0,
\]
so $\widehat{q}_j$ is strictly homogeneous of degree $m+j$ there.
Fix $\phi \in S(\Rr_{\xi}^{d})$ and test \eqref{FT-homog} against $\phi(\xi) \eta(\tau)$
with $\eta \in S(\Rr_\tau)$ arbitrary. Define the $\tau$-distributions
\[
H(\eta) :=\scal{\widehat{q}_j(\xi,\cdot)}{\phi(\xi)\otimes \eta}, \
P(\eta) := \scal{\widehat{p}_j(\xi,\cdot)}{\phi(\xi)\otimes \eta}.
\]
By Paley-Wiener, $H$ is the boundary value on $\Rr$ of a holomorphic function on the lower
half-plane $\Cc_-$ with polynomial growth. In contrast, $P$ is a finite linear combination of
derivatives of $\delta$ at $\tau = 0$ since $\widehat{p}_j$ is supported at $\xi=0$ and
$\tau=0$, hence $\supp P\subset\{0\}$.
Now \eqref{FT-homog} yields, for all $\lambda>0$,
\[
H(\eta_{\lambda}) = \lambda^{-(m+j)} H(\eta) - \lambda^{-(m+j)} \log(\lambda) P(\eta), \
\eta_{\lambda}(\tau) := \int_{\Rr^d} \phi(\xi) \eta(\lambda^{-m}\tau)\,d\xi,
\]
and in particular we can fix $\phi$ with $\int \phi = 1$ and absorb it in the notation so that
$\eta_{\lambda}(\tau) = \eta(\lambda^{-m}\tau)$. For any $\tau_0$ with $\Im \tau_0 < 0$ choose $\eta$ supported in a small neighbourhood of $\tau_0$. Then $P(\eta) = 0$ since $\supp P\subset\{0\}$, and we obtain the identity
\[
H(\eta_{\lambda}) = \lambda^{-(m+j)} H(\eta), \ \Im \tau_0 < 0.
\]
By uniqueness of analytic continuation, this identity holds as an equality of boundary values for all
real $\tau$ as well, and hence the term proportional to $\log \lambda$ must vanish:
$P(\eta)=0$ for all $\eta \in S(\Rr)$. Therefore $P=0$ as a distribution, hence $\widehat{p}_j = 0$ and thus $p_j=0$.
Expressing everything in the localizer with respect to standard coordinates, we find:
\[
k_Q(x, s-t) = \sum_{j=0}^{j_N} \Theta(\check{q}_{j,x}(o_x, t-s)) + O(|t|^{\frac{N}{r}})
\]

as $|t| \to 0$ uniformly in $x$ and, making use of the homogeneous dimension $m + d + 2$ of $M \times \Rr$
\[
\Theta_x(\check{q}_{j,x}(o_x, t)) = t^{\frac{j-d-2}{m}} \Theta_x(\check{q}_x(o_x, 1))
\]

for each $x, t > 0$ where we made use of the left-trivialization of $(\G \dtimes \Rr)_{H, x}$ to identify the two sides. Setting $\check{q}_j(x) \,dt := \Theta_x(\check{q}_{j,x}(o_x, 1))$ furnishes the result.
\end{proof}


\begin{thebibliography}{99}
\bibitem{Albin} P. Albin, \emph{A renormalized index theorem for some complete asymptotically regular metrics: the Gauss-Bonnet theorem}, Adv. in Maths,, 213(1):1--52, 2007.
\bibitem{BLS} K. Bohlen, J.-M. Lescure, E. Schrohe, \emph{The Atiyah-Patodi-Singer index formula for pseudodifferential operators on Lie structures}, in preparation. 
\bibitem{BohlenSchrohe} K. Bohlen, E. Schrohe, \emph{Getzler rescaling via adiabatic deformation and a renormalized index formula}, J. Math. Pures Appl. 120 (2018): 220-252.
\bibitem{DH} S. Dave, S. Haller, \emph{Graded hypoellipticity of BGG sequences}, Annals of Global Analysis and Geometry 62.4 (2022): 721-789.
\bibitem{DS} C. Debord, G. Skandalis, \emph{Adiabatic groupoid, crossed product by $\mathbb{R}_{+}^{\ast}$ and pseudodifferential calculus}, Adv. Math. 257 (2014), pp. 66--91.
\bibitem{DebordSkandalis} C. Debord, G. Skandalis, \emph{Blow-up constructions for Lie groupoids and a Boutet de Monvel type calculus}, M\"unster Journal of Mathematics 14.1 (2021).
\bibitem{vEY} E. van Erp, R. Yuncken, \emph{A groupoid approach to pseudodifferential calculi}, Journal f\"ur die reine und angewandte Mathematik (Crelles Journal) 2019.756 (2019): 151-182.
\bibitem{Grad} \v{Z}. Grad, \emph{Fundamentals of Lie categories and Yang-Mills theory for multiplicative Ehresmann connections} arXiv preprint arXiv:2507.08220 (2025).
\bibitem{Greiner} P. Greiner, \emph{An asymptotic expansion for the heat equation}, Arch. Rational Mech. Anal. 41, 163--218 (1971).
\bibitem{Heitsch} J. L. Heitsch, \emph{Bismut super-connections and the Chern characters for Dirac operators on foliated manifolds}, K-Theory, 9:507--528, 1995.
\bibitem{SLLS} A. R. H. S. Sadegh, S. Liu, Y. Loizides, J. Sanchez, \emph{A fixed-point formula for Dirac operators on Lie groupoids}, Journal of Functional Analysis, 287(11), 2024: 110624.
\bibitem{LR} N. P. Landsman, B. Ramazan, \emph{Quantization of Poisson algebras associated to Lie algebroids}, Contemporary Mathematics 282 (2001).
\bibitem{LV} J.-M. Lescure, S. Vassout, \emph{On evolution equations for Lie groupoids}, arXiv preprint, arXiv:2010.00227 (2020).
\bibitem{McDonald} E. McDonald, \emph{$L_p$ estimates in the Androulidakis-Mohsen-Yuncken calculus}, arXiv preprint arXiv:2410.13701 (2024).
\bibitem{Melrose} R. Melrose, \emph{The Atiyah-Patodi-singer index theorem}, AK Peters/CRC Press, 1993.
\bibitem{Monthubert} B. Monthubert, \emph{Pseudodifferential calculus on manifolds with corners and groupoids}, Proc. Amer. Math. Soc. 127.10 (1999), pp. 2871-2881.
\bibitem{NWX} \emph{Pseudodifferential operators on differential groupoids}, Pacific journal of mathematics 189.1 (1999): 117-152.
\bibitem{Piriou} A. Piriou, \emph{Une classe d'op\'{e}rateurs pseudo-diff\'{e}rentiels du type de Volterra}, Ann. Inst. Fourier 20, 77--94 (1970).
\bibitem{PongeMika} R. Ponge, H. Mikayelyan, \emph{On the asymptotic completeness of the Volterra calculus}, Journal d'Analyse Mathematique, 94(1), 249-263, 2004.
\bibitem{Skandalis_course_2015} G. Skandalis, \emph{{C}${}^*$-alg\`ebres, {A}lg\`ebres de {V}on {N}eumann, {E}xemples}, Cours de M2, 2015, \href{https://webusers.imj-prg.fr/~georges.skandalis/poly2015.pdf}{https://webusers.imj-prg.fr/$\sim$georges.skandalis/poly2015.pdf}.
\bibitem{So} B. K. So, \emph{Exponential coordinates and regularity of groupoid heat kernels}, Central European Journal of Mathematics, Volume 12, pages 284--297, (2014).
\bibitem{Trotter} H. F. Trotter, \emph{Approximation and perturbation of semigroups}, Linear Operators and Approximation II: Proceedings of the Conference held at the Oberwolfach Mathematical Research Institute, Black Forest, March 30--April 6, 1974 (pp. 3-21), Basel: Birkh\"auser Basel.
\bibitem{Vassout} S.~Vassout, \emph{Unbounded pseudodifferential calculus on {L}ie groupoids},
{\em J. Funct. Anal.}, 236(1):161--200, 2006.
\end{thebibliography}
\end{document}